\documentclass[11pt,psfig]{article}
\usepackage{amsfonts,amssymb,amstext}
\usepackage{amsmath,amsthm}
\usepackage{graphicx}
\usepackage{url}
\usepackage{hyperref}

\setlength{\topmargin}{-0.5in}
\setlength{\oddsidemargin}{0in}
\setlength{\evensidemargin}{0in}
\setlength{\textheight}{9in}
\setlength{\textwidth}{6.5in}

\def\R{\mathbf{R}}
\def\mixbound{O(n^{2/d} (\log n)^{(d-2)/d})}
\def\omn{\kappa(n)} 
\def\const{c'}

\def\al{\alpha}
\def\dl{\delta}

\def\ep{\epsilon}
\def\eps{\epsilon}
\def\th{\theta}

\def\vol{\mathrm{vol}}

\def\pr{\mathbb{P}}
\def\Pr{\mathbb{P}}
\def\E{\mathbb{E}}

\def\Bin{\textrm{Bin}}

\def\rc{r_{\mathrm{conn}}}

\parskip=+3pt
\newtheorem{theorem}{Theorem}

\newtheorem{lemma}[theorem]{Lemma}

\newenvironment{proofof}[1]{\vspace{1ex}\noindent{\bf Proof of #1.}\hspace{0.5em}}
	{\hfill\qed\vspace{1ex}}

\title{On the Mixing Time of Geographical Threshold Graphs}
\author{Andrew Beveridge\thanks{Department of Mathematics, Statistics and Computer Science, Macalester College, Saint Paul, MN 55105, USA. E-mail: \texttt{abeverid@macalester.edu}. 
}, 
Milan Bradonji\'c\thanks{Mathematics of Networks and Communications, Bell Laboratories, Alcatel-Lucent, 600 Mountain Avenue, Murray Hill, NJ 07974, USA. E-mail: \texttt{milan@research.bell-labs.com}. 
This work was done while the author was at Los Alamos National Laboratory.
}
}

\date{}

\begin{document}
\maketitle

\begin{abstract}
We study the mixing time of random graphs in the $d$-dimensional 
toric unit cube $[0,1]^d$ generated by the geographical 
threshold graph (GTG) model, a generalization of random geometric graphs (RGG). In a GTG, 
nodes are distributed in a Euclidean space, and edges are assigned according to a threshold 
function involving the distance between nodes as well as randomly chosen node weights, drawn from some distribution. 
The connectivity threshold for GTGs is comparable to that of RGGs, essentially corresponding to a connectivity radius of $r=(\log n/n)^{1/d}$. However, the degree distributions at this threshold are quite different: in an RGG the degrees are essentially uniform, while RGGs have heterogeneous degrees that depend upon the weight distribution.
Herein, we study the mixing times of random walks on $d$-dimensional GTGs near the connectivity threshold for $d \geq 2$. If the weight distribution function  decays with $\pr[W \geq x] = O(1/x^{d+\nu})$ for an arbitrarily small constant $\nu>0$ then the mixing time of GTG is $\mixbound$. This matches the known mixing bounds for the $d$-dimensional RGG.

\noindent
\textbf{Keywords}: Geographical threshold graphs, Mixing time, Random geometric graphs.
\end{abstract}

\section{Introduction}
\label{sec:intro}

In recent years, we have witnessed the development of numerous approaches  to study the
structure of large real-world technological and social networks, and to
optimize processes on these networks.    
Large networks, such as the Internet, World Wide Web, phone call graphs, 
infections disease contacts and financial transactions, have provided
new challenges for modeling and analysis~\cite{Bonato-2005-Surveymodels}.
As an example, Web graphs may have billions of nodes and edges,
which implies that processing and
extracting information on these large sets of data, is 
`hard'~\cite{Abello-2002-MassiveDataSets}.
Extensive theoretical and experimental research has been done
in web-graph modeling, attempting to capture
both the structure and dynamics of the web 
graph~\cite{bonato-2009-waw,aiello-2007-waw,flaxman-2007-waw,flaxman-2007-intmath,kumar-2000-stochastic,barabasi-1999-emergence,aiello00random,bollobas-2001-degree,cooper01general}.

In general, a particularly fertile approach is to consider the network as an instance of an ensemble, arising from a suitable random generative model.       
Since the seminal papers on the evolution of uniform random graph model~\cite{erdos-1959-random,erdos-1960-evolution}, 
many other models have been
proposed to better capture the structure seen in real-world
networks, which are systematically covered in~\cite{durret-2005-random}.
One straightforward example is
the random geometric graph (RGG) model, where nodes are placed
uniformly at random in a Euclidean space and edges are placed between
any two nodes within a threshold distance. For further study of RGGs, see the monograph by Penrose~\cite{penrose:book}.
The RGGs have the advantage of
describing many aspects of systems such as sensor networks, while
avoiding unnecessary details. 
However, they fail to capture heterogeneity in the network.

Geographical threshold graphs (GTGs), introduced in \cite{masuda-2005-geographical}, are a generalization of RGGs. Heterogeneity in the network is provided
via a richer stochastic model that nevertheless preserves much of the simplicity of the RGG model.  GTGs assign to nodes both a location and a weight. The weight may represent a quantity such as
transmission power in a wireless network or influence in a social
network. Edges are placed between two nodes if a symmetric function of
their weights and the distance between them exceeds a certain threshold~\cite{bradonjic-2007-allerton}. 

Structural properties of GTGs, such as connectivity,
clustering coefficient, degree distribution, diameter, existence and
absence of the giant component, chromatic number have been recently analyzed~\cite{bradonjic-2008-im,bradonjic-2007-waw,bradonjic-2010-dmtcs}.
These properties are not merely of theoretical importance,
but also play an important role in applications.
In communication networks, connectivity implies the ability to reach all
parts of the network. In packet routing, diameter gives the minimal
number of hops needed for transmission between two arbitrary nodes. In the case of epidemics, the existence or absence of the giant component
controls whether the epidemic spreads or is contained.  
When treating the node colors as the different radio channels or frequencies,
the chromatic number gives the minimal number of
channels needed so that neighboring radios do not interfere with each
other.

Herein, we consider random walks on GTGs near the  connectivity threshold.
Random walks (or more formally, Markov chains) on large networks have many applications. For example, random walks model  the spread of disease or the dispersion of information \cite{boyd:gossip}. The \emph{mixing time} of a random walk is the expected number of random steps that are required to guarantee that the current distribution is close to the stationary distribution. Mixing times are an essential tool in both theory and practice: for example, see the recent survey of Diaconis \cite{diaconis:mcmc} on  Markov chain Monte Carlo methods. 
In~\cite{bhamidi-2008}, the authors derived the mixing time of exponential random graphs, a model extensively  used in sociology, showing that the mixing time of the \emph{Glauber dynamics} was $\Theta(n^2 \log n)$  for the unimodal \emph{Gibbs distribution}, and exponential for 
the multimodal case. For the definitions of the Glauber dynamics and Gibbs distribution see~\cite{bhamidi-2008}. 
 
Upper bounds on the mixing time for RGG at the connectivity threshold have been well studied. For the 2-dimensional RGG, Avin and Ercal~\cite{avin-2007-cover} showed that  the mixing time is $O(n)$. More recently, Cooper and Frieze~\cite{cooperfrieze:cover} proved that for $d \geq 3$, the mixing time of a $d$-dimensional RGG is $\tilde{O}( n^{2/d})$ (in this notation, the logarithmic factors are suppressed). 
In this paper, we study the mixing times of random walks on $d$-dimensional  GTGs near the connectivity threshold, where $d \geq 2$. We prove that when the node weight distribution decays sufficiently quickly, the mixing time is $\mixbound = \tilde{O}( n^{2/d})$, which matches the mixing bounds for RGG.
This result is formulated more precisely as Theorem \ref{thm:mixing} in the next section.

\section{The GTG model and the mixing time} 

The GTG model is constructed from 
of a set of $n$ nodes placed independently uniformly at random 
into the unit cube in $\R^d$.
A non-negative weight $w_i$, taken randomly and independently from a
continuous probability distribution function $f(w): \R^+ \to \R^+$, is
assigned to each node $v_i$ for $i \in \{1,2,\dots,n\}$. 
For nodes $i,j$ at distance $r_{ij}$, the edge $\{i,j\}$ exists if and only if the
following connectivity relation is satisfied:
\begin{equation}\label{eq:connectivity_relation} 
G(w_i,w_j)D(r_{ij}) \geq \theta_n\,.
\end{equation} 
Here $G(w_i,w_j)$ is the interaction strength between nodes, $D(r)$ is a decreasing function of $r$ and  $\th_n$ is a given threshold parameter that
depends on the size $n$ of the network. 
The interaction
strength $G(w_i,w_j)$
is usually taken to be symmetric and either
multiplicatively or additively separable, i.e., in the form of
$G(w_i,w_j) = g(w_i)g(w_j)$ or $G(w_i,w_j) = g(w_i) + g(w_j)$. 
We use $D(r) =r^{-s}$ where $s>0$, which is a typical attenuation in 
the path-loss model in wireless communications~\cite{bradonjic-2007-allerton}.

Some basic results have already been shown, including
the expected degree of a node with given weight $w$,
when the nodes are distributed uniformly over a unit space~\cite{masuda-2005-geographical,bradonjic-2007-allerton}.
In both the multiplicative and additive cases of $G(w_i,w_j)$, questions
of diameter, connectivity, and topology control have been
addressed~\cite{bradonjic-2007-allerton}. 

Here we restrict ourselves to nodes distributed uniformly over $[0,1]^d$. For analytical
simplicity we take the space to be the $d$-dimensional toric unit cube $[0,1]^d$. Our connectivity relation uses an additive interaction strength  $G(w_1, w_2) = w_1 + w_2$ and a decay function $D(r)=r^d$, so that nodes $i,j$ are adjacent when
\begin{equation}\label{eq:connectivity_r2}
\frac{w_i + w_j}{r_{ij}^d} \geq \theta_n.
\end{equation}
This connectivity relation identifies a $d$-dimensional sphere of influence for each vertex.

We assume that our weight distribution $f(w)$ has finite mean and finite variance. Let the cumulative density function (cdf), for the distribution of node weights $f(w)$, be
\begin{equation}
\label{eq:cdf}
F(x) = \pr[W \leq x] = \int_0^x f(w) dw.
\end{equation}
The argument in \cite{bradonjic-2007-waw} characterizing the degrees of a GTG for a $2$-dimensional GTG is easily generalized to dimension $d$ (only the leading constant changes).  
For any threshold $\theta_n = O(n)$ and any weight distribution such that 
$\pr[W \geq x] = O(1/x^{1+\eps})$ for an arbitrarily small constant $\eps>0$, 
the degree distribution of a node $v$ with weight $w$ follows the binomial distribution
\begin{equation}
\label{eq:degreedist}
\deg(v | w) \sim \Bin \left(n-1, p(w) \right), 
\end{equation}
where $p(w) = \frac{\Upsilon_d}{\theta_n}(w + \mu)$, 
$\mu=\E[W]$ is the expected node weight and
 $\Upsilon_d$ is the volume of the unit ball in $d$ dimensions:
\begin{equation}
\nonumber
\Upsilon_d = \frac{\pi^{d/2}}{ \Gamma(d/2 + 1)} = 
\left\{
\begin{array}{ll}
\pi^k/k!, & d =2k \mbox{ even}, \\
2^d k! \pi^k / d!, & d=2k+1 \mbox{ odd}.
\end{array}
\right.
\end{equation}
Herein, we assume that  $\pr[W \geq x] = O(1/x^{d+\nu}) = o(1/x^2)$ which  ensures that the weight distribution has finite mean and finite variance.

We now highlight the differences between the GTG and RGG models.
The main characteristic of the GTG model  is its tunable topology. By changing the input parameters $f(w)$ and $\theta_n$, one can obtain graphs with different structural properties. For example, we can generate an RGG for a desired degree distribution, while the degrees of RGG are always uniform. 
The major distinction between our analysis for GTGs and the analysis for RGGs, lies in addressing the following two issues: 
(i) two spatially close nodes in a GTG are not necessarily connected, since they may both have very low weight;  
(ii) two distant nodes in a GTG are not necessarily disconnected, since one of them can carry heavy weight. 
These two issues never happen in the RGG model, and represent a challenge in our analysis. 

Theorem 5.3 of~\cite{bradonjic-2008-im} characterizes the  connectivity threshold
for a GTG in $2$ dimensions. We list the changes to the proof to generalize to $d$-dimensional GTGs. We tile the unit space $[0,1]^d$ into 
$\Theta(n / \log n)$ cubes of equal volume (as opposed to 2-dimensional squares). 
We use the connectivity relation equation (\ref{eq:connectivity_r2}) for the $d$-dimensional space and connectivity radius $\rc = \left( \log (\alpha n) /( \alpha n \Upsilon_d ) \right)^{1/d}$ instead, with the constant $\al \in (0,1)$ specified in Theorem 5.3 of~\cite{bradonjic-2008-im}.
With these modifications, we obtain the proof of the connectivity threshold for a $d$-dimensional GTG, given by the following theorem.
\begin{theorem}
\label{thm:connect}
Let $G$ be a GTG in the $d$-dimensional toric unit cube $[0,1]^d$ with threshold function $\theta_n = c n / \log n$ where the constant 
$c < \sup_{\alpha \in (0,1)} \alpha F^{-1}(1-\alpha)/4$. Then $G$ is connected whp\footnote{
We will use the notation ``with high probability'' and denote it as \textit{whp}, meaning with probability $1-o(1)$ as $n$ tends to infinity.}.
\end{theorem}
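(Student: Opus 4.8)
The plan is to follow the structure of the proof of Theorem 5.3 in \cite{bradonjic-2008-im}, adapting the two-dimensional tiling argument to the $d$-dimensional toric cube. First I would fix a constant $\alpha \in (0,1)$ achieving (nearly) the supremum in the hypothesis, so that $c < \alpha F^{-1}(1-\alpha)/4$, and tile $[0,1]^d$ into $m = \Theta(n/\log n)$ congruent subcubes of side length $(\log n / n)^{1/d}$ scaled appropriately, say chosen so that each cube has volume $\log(\alpha n)/(\alpha n)$. The key geometric observation is that with the connectivity radius $\rc = (\log(\alpha n)/(\alpha n \Upsilon_d))^{1/d}$, any two nodes lying in the \emph{same} cube, or in \emph{adjacent} cubes (sharing a face, edge, or corner), are within distance $\rc$ of each other, provided the cube diameter is at most $\rc/2$ or so; in $d$ dimensions a cube of volume $v$ has diameter $\sqrt{d}\, v^{1/d}$, so one simply picks the constant in the number of tiles so that $2\sqrt{d}$ times the side length is at most $\rc$. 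This replaces the factor that changes the leading constant, and is why only the leading constant (hidden in the relation between $c$ and $\alpha$) differs from the $d=2$ case.

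The probabilistic core has two parts. First, a \emph{weight event}: I would show that whp every cube contains at least one node whose weight exceeds $F^{-1}(1-\alpha)$ — call such a node \emph{heavy}. Since each node independently lands in a given cube with probability $1/m = \Theta(\log n / n)$ and independently has weight exceeding $F^{-1}(1-\alpha)$ with probability $\alpha$, the expected number of heavy nodes per cube is $\Theta(\log n)$, and a Chernoff bound plus a union bound over the $m = \Theta(n/\log n)$ cubes gives that every cube has a heavy node whp (this is where the constant $c$, equivalently the number of tiles, must be taken small enough relative to $\alpha F^{-1}(1-\alpha)$). Second, an \emph{occupancy event}: whp every cube is nonempty — indeed contains $\Theta(\log n)$ nodes — again by Chernoff and a union bound. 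Then for the connectivity relation \eqref{eq:connectivity_r2}: if $u$ is heavy in cube $Q$ and $v$ is any node in $Q$ or an adjacent cube, then $r_{uv} \le \rc$, so $(w_u + w_v)/r_{uv}^d \ge w_u/\rc^d \ge F^{-1}(1-\alpha) \cdot \alpha n \Upsilon_d / \log(\alpha n) \ge \theta_n = cn/\log n$ by the choice $c < \alpha F^{-1}(1-\alpha)/4$ (the factor $4$ absorbing the $\Upsilon_d$ and the $\log(\alpha n)$ versus $\log n$ discrepancy and the adjacency slack). Hence every node is adjacent to a heavy node in its own cube, and every two heavy nodes in adjacent cubes are adjacent to each other.

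Finally I would assemble these into connectivity of $G$. Condition on the weight and occupancy events, which hold simultaneously whp. Build an auxiliary graph on the cubes: cubes are adjacent iff they are geometrically adjacent in the toric tiling; this auxiliary graph is the $d$-dimensional toric grid, which is connected. Pick one heavy node per cube as a representative; by the argument above, representatives of adjacent cubes are adjacent in $G$, so the representatives form a connected subgraph of $G$ spanning all cubes. Every remaining node lies in some cube and is adjacent to that cube's representative. Therefore $G$ is connected.

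I expect the main obstacle to be the geometric bookkeeping that pins down the admissible range of $c$: one must choose the tiling side length, verify the ``same or adjacent cube implies within $\rc$'' claim on the torus (including the diagonal neighbors, which is where the $\sqrt d$ enters), and then check that the resulting inequality $w_u/\rc^d \ge \theta_n$ is implied by $c < \alpha F^{-1}(1-\alpha)/4$ for all large $n$ — reconciling $\log(\alpha n)$ with $\log n$ and tracking the constant $\Upsilon_d$ through both $\rc$ and the tile volume. The probabilistic estimates themselves are routine Chernoff-and-union-bound arguments; the subtlety is entirely in making the constants line up, exactly as in the $d=2$ proof of \cite{bradonjic-2008-im}, with $\sqrt d$ and $\Upsilon_d$ now dimension-dependent.
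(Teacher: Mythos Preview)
Your proposal is correct and follows essentially the same approach as the paper: tile $[0,1]^d$ into $\Theta(n/\log n)$ cubes, use Chernoff plus a union bound to guarantee each cube contains a heavy node (weight exceeding $F^{-1}(1-\alpha)$), show heavy nodes in adjacent cubes are joined via the connectivity relation with radius $\rc=(\log(\alpha n)/(\alpha n\Upsilon_d))^{1/d}$, and then attach every remaining node to the heavy backbone. The paper itself does not write out a full proof but simply records the modifications needed to the $2$-dimensional argument of \cite{bradonjic-2008-im}; your sketch supplies exactly those details, and you have correctly isolated the only nontrivial point, namely reconciling the geometric constants $\sqrt{d}$, $\Upsilon_d$, and the $\log(\alpha n)$ versus $\log n$ discrepancy with the factor $4$ in the hypothesis on $c$.
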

In essence, the proof of Theorem \ref{thm:connect} consists of two parts. First, one shows that the $\alpha n$ nodes of highest weight are connected. Next, one shows that the remaining $(1-\alpha)n$ nodes are connected to first set. This partition into \emph{high weight nodes} and \emph{low weight nodes} will also be useful herein.

This paper gives an upper bound on  the mixing time for a simple random walk on a geographical threshold graph $G=(V(G),E(G))$ at the connectivity threshold, provided that the weight distribution decays at an adequate rate.
A simple random walk on a graph $G$  (cf. \cite{lovasz96}) consists of a sequence of vertices $(w_0, w_1, \ldots, w_t, \ldots )$ such that
for $t \geq 0$, $\Pr[ w_{t+1} = j \mid w_t= i ]$ is $1/\deg(i)$ if $\{i,j \} \in E(G)$ and 0 otherwise. 
Let $\pi$ denote the \emph{stationary distribution} of this random walk, so that $\pi_k = \deg(k)/2|E|$ for every node $k$.  Supposing that  $i$ is the initial node of our random walk, let $P^t(i,\cdot)$ denote the distribution of the states at time $t$.  The \emph{variational distance} at time $t$ is
\[
\Delta_i(t) = \frac{1}{2} \sum_{j \in V} \left| P^t(i,j) - \pi_j \right|.
\] 
When $G$ is not bipartite, we have $\lim_{t \rightarrow \infty} \Delta_i(t) = 0$ for every $i \in V(G)$.
The \emph{mixing time} from node $i$ measures how quickly $P^t(i,\cdot)$ converges to $\pi$. 
Explicitly, this mixing time from $i$ is defined as
\[
\tau_i(\delta) =  \min \{ t \mid \Delta_i (t') \leq \delta, \forall t' \geq t \}.
\] 
The mixing time of $G$ is 
\[
\tau(\delta) = \max_{i \in V(G)} \tau_i(\delta).
\]
We choose $\delta = 1/n$ as our desired distance from the stationary distribution. Our main result is given by the following theorem. 
\begin{theorem}
\label{thm:mixing}
Let $G$ be a connected GTG with threshold function $\theta_n = cn / \log n$ in 
the $d$-dimensional toric unit cube $[0,1]^d$.
If the weight distribution satisfies $\pr[W \geq x] = O(1/x^{d+\nu})$ for $\nu >0$,
and 
\begin{equation}
\nonumber
c \leq \frac{1}{d+3} \min \left\{  \sup_{\alpha \in (0,1/2]} \alpha F^{-1} (\alpha), 
\sup_{\alpha \in (1/2,1)} (1-\alpha) F^{-1} (\alpha)  \right\}\,,
\end{equation}
then $\tau(1/n) = \mixbound$, whp.
\end{theorem}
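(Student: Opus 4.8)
The plan is to bound the mixing time via a conductance (Cheeger-type) argument combined with the standard relation between conductance and mixing time for lazy or non-bipartite random walks. The key tool will be the canonical-path (or flow) method of Sinclair: I would construct, for every pair of vertices $u,v$, a path in $G$ that mimics a geodesic in the toric cube $[0,1]^d$, and then control the maximum edge congestion. Since $\tau(1/n)$ is governed (up to $\log n$ factors coming from $\pi_{\min}$ and the $1/n$ accuracy) by the inverse spectral gap, and the spectral gap is controlled by congestion $\rho$ through $\mathrm{gap} \geq 1/(\max\text{ length} \cdot \rho)$, the target $\mixbound$ will follow once I show that a typical edge carries flow from at most $\tilde O(n^{1+2/d})$ pairs while each edge has $\pi_e = \deg/(2|E|) = \tilde\Theta(1/n)$ conductance weight, and paths have length $\tilde O(n^{1/d})$.

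The concrete steps, in order: (1) Tile $[0,1]^d$ into $\Theta(n/\log n)$ subcubes of side $\Theta(\rc)$, exactly as in Theorem~\ref{thm:connect}; by a Chernoff bound every subcube has $\Theta(\log n)$ nodes whp, and the total edge count is $|E| = \tilde\Theta(n)$ since typical degrees are $\tilde\Theta(1)$ (from \eqref{eq:degreedist} with $\theta_n = cn/\log n$, $p(w) = \tilde\Theta((w+\mu)/n)$). (2) Establish a \emph{local connectivity} lemma: within any block of $O(1)$ adjacent subcubes, the induced subgraph on the nodes there is connected, and moreover contains a node of weight $\geq F^{-1}(\alpha)$ (a ``high weight'' hub) that is adjacent to \emph{all} nodes in its own subcube. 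Issue (i) from the introduction --- two spatially close low-weight nodes need not be adjacent --- is handled here: route through the local hub instead. Issue (ii) --- long edges from heavy nodes --- is handled by simply noting such edges only \emph{help}, since they can only decrease congestion; one must just make sure the flow construction does not accidentally overload a heavy node, which is avoided by routing along the grid through hubs rather than through arbitrary heavy vertices. (3) For each pair $u,v$ choose a lattice path between their subcubes of length $O(n^{1/d})$, lift it to an actual path in $G$ by hopping hub-to-hub (and using the local connectivity lemma to enter/exit at the endpoints), breaking ties in the lattice routing in a balanced ``dimension-by-dimension'' fashion so that each subcube-to-subcube transition is used by $O(n^{1+1/d}\,\mathrm{polylog})$ of the $\binom n2$ paths. (4) Convert subcube congestion to edge congestion: each inter-subcube transition is realized by one specific hub-hub edge (or a bounded-length detour), so the congestion $\rho = \max_e \frac{1}{\pi_e}\sum_{\gamma \ni e}\pi_u\pi_v$ is $\tilde O(n) \cdot \tilde O(n^{1+1/d}) \cdot (1/n)^2 = \tilde O(n^{1/d})$, whence $\mathrm{gap}^{-1} = \tilde O(n^{1/d} \cdot n^{1/d}) = \tilde O(n^{2/d})$, and $\tau(1/n) = \mathrm{gap}^{-1}\log(1/\pi_{\min} \delta) = \tilde O(n^{2/d})$, which is exactly $\mixbound$ once the logarithmic powers are tallied as $(\log n)^{(d-2)/d}$.

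The main obstacle I expect is step (2)--(3): proving that the high-weight hubs are dense enough and well-enough distributed that every subcube contains (or is adjacent to a subcube containing) a hub with full local dominance, \emph{and} that the hub-to-hub edges between neighboring subcubes actually exist whp. This is precisely where the hypothesis $\pr[W\ge x]=O(1/x^{d+\nu})$ and the constant constraint $c \le \frac{1}{d+3}\min\{\cdots\}$ enter: the tail bound guarantees (via a union bound over $\Theta(n/\log n)$ subcubes) that no subcube has too many anomalously heavy nodes while still having at least one node above the quantile $F^{-1}(\alpha)$, and the constant $c$ must be small enough that $\rc^d \theta_n \le 2F^{-1}(\alpha)$ so that two hubs in adjacent subcubes satisfy the connectivity relation \eqref{eq:connectivity_r2}. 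The second-hardest point is the balanced lattice routing: a naive geodesic choice can overload the ``equatorial'' subcubes, so I would use the standard trick of routing through a uniformly random intermediate coordinate order (or the midpoint-first scheme), for which the per-edge load bound is classical for the torus grid $\tilde O(n^{1/d})$ relative to the uniform pair measure; transferring this to the weighted pair measure $\pi_u\pi_v$ needs the observation that $\sum_u \pi_u = 1$ and $\max_u \pi_u = \tilde O(1/n)$, which is where the finite-variance tail is used once more to control the heaviest degree.
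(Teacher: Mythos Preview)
Your overall architecture---tile into $\Theta(n/\log n)$ subcubes, route via high-weight hubs along dimension-by-dimension grid paths, and bound mixing through canonical-path congestion---is exactly the paper's approach, and your identification of the local hub-to-hub connectivity as the place where the constraint on $c$ enters is correct.

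There is, however, a genuine gap in the last step. You assert $\max_u \pi_u = \tilde O(1/n)$, justified by ``the finite-variance tail \ldots\ to control the heaviest degree.'' This is false under the hypotheses of the theorem. For a Pareto tail $\Pr[W\ge x]\sim x^{-(d+\nu)}$ one has $W_{\max}=\Theta(n^{1/(d+\nu)})$ whp, so the maximum degree is $\Theta(n^{1/(d+\nu)}\log n)$ and $\pi_{\max}=\Theta(n^{-1+1/(d+\nu)})$, which for small $\nu$ is close to $n^{-(d-1)/d}$---polynomially larger than $1/n$. You therefore cannot replace $\pi_u\pi_v$ by $\tilde O(n^{-2})$ in the congestion sum; a single heavy endpoint can inflate $\deg(u)\deg(v)$ by a polynomial factor. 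The paper resolves this not with a $\pi_{\max}$ bound but by partitioning the high-weight nodes into $M=o(\log n)$ weight classes $A_1,\dots,A_M$ with thresholds $a_k=F^{-1}(1-a_{k-1}^{-(1+\epsilon)})$, showing $|A_k|=\Theta(n/a_{k-1}^{1+\epsilon})$ is concentrated, bounding the contribution of $(A_r,A_s)$-paths to $\sigma(e)=\sum_{\gamma_{uv}\ni e}\deg(u)\deg(v)$ by $O\bigl(n^{(d+1)/d}(\log n)^{(d-1)/d}/(a_r^{\epsilon}a_s^{\epsilon})\bigr)$, and then proving $\sum_k a_k^{-\epsilon}=O(1)$ via a ratio test. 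This weight-class decomposition is precisely where the tail hypothesis $\Pr[W\ge x]=O(x^{-(d+\nu)})$ does its real work (through $h^{-1}(y)=O(y^{1/(d+\nu)})$ and the choice $\epsilon<\nu/2d$), and your plan does not contain an analogue of it.

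A secondary issue: routing each inter-subcube transition through ``one specific hub-hub edge'' rather than randomizing over the $\Theta(\log^2 n)$ available hub-hub edges between adjacent cubes (as the paper does) costs an extra factor of $\log^2 n$ in congestion; you would obtain $\tau(1/n)=O\bigl(n^{2/d}(\log n)^{(3d-2)/d}\bigr)$ rather than the claimed exponent $(d-2)/d$.
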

This mixing bound for GTG  matches the best known mixing bound for RGG, and we believe that this equivalence is essentially correct. Intuitively, the $\alpha n$ high weight nodes of a GTG  $G$ contain a spanning subgraph $G'$ that is an RGG. The mixing time of $G$  corresponds to the mixing time of $G'$. There are some extremely long edges in $G \backslash G'$, but they seem to be too sparse to aid in mixing. At the same time, we find that the $(1-\alpha)$ low weight nodes (with very short edges) do not slow down mixing. 
For technical reasons, we consider a weight decay of $\pr[W \geq x] = O(1/x^{d+\nu})$. 
We conjecture that this equivalence continues to hold for  $\pr[W \geq x] = o(1/x^{1+1/d})$.

The rest of the paper is organized as follows. In Section~\ref{sec:node_weights}, we derive upper and lower bounds on the maximal weight. Consequently, we find upper and lower bounds on the node degrees, and show that the number of edges $|E(G)| = \Theta(n \log n)$.
In Section~\ref{sec:canonical} we construct a family of canonical paths for the GTG and then prove
Theorem \ref{thm:mixing} in Section~\ref{sec:mixing_time_for_GTG}. In Section~\ref{sec:conc},
 we reflect on our results and explain why we believe that our result holds for more slowly decaying weight distributions. Finally, in~\ref{sec:examples},  we exemplify our results with  two different weight functions. Our first example is the exponential weight distribution $f(w) = e^{-w}$, with cumulative density function
$F(x) = 1 - e^{-x}$. Our second example is the Pareto distribution with cumulative density function 
$F(x) = 1 - x^{- \gamma}$, where $x \geq 1$ and $\gamma > d \geq 2$.

\section{Node weights and node degrees in GTG}
\label{sec:node_weights}
In this section, we determine the upper and lower bounds on the maximal weight $W_{\max}$ in a geographical threshold graph $G= (V(G), E(G))$. Subsequently, we derive the upper and lower bounds on the degrees of the nodes in GTG near the connectivity threshold. Finally, we show that the number of edges $|E(G)| = \Theta(n \log n)$ for these connected GTGs.

We adopt the following notation for the remainder of the paper. We have a constant $\alpha \in (0,1)$
and we fix small constants $\epsilon$, $\nu$ so that
\begin{equation}
\label{eq:epnu}
0 < \epsilon < \nu/2d.
\end{equation}
Furthermore, we assume that there is a weight $W_0$ such that if $W \geq W_0$ then $\pr[W \geq x] = O(1/x^{d+\nu})$. For brevity, we will state this as ``$\pr[W \geq x] = O(1/x^{d+\nu})$.''
Finally, we
use  $\omn$ to denote an arbitrarily slowly increasing function of $n$, that is $\omn = \omega(1)$.

The maximal weight $W_{\max}$ satisfies $\pr[W_{\max} \leq x] = F(x)^n$, since the weights are independently distributed.
Consider a continuous weight distribution $f(w)$ with cdf $F(x)$. Our goal is to find two thresholds $W_1, W_2$, such that $\pr[W_{\max}  \leq W_1] = o(1)$ and $\pr[W_{\max} \geq W_2] = o(1)$.
We can `invert' $F(x)$ using the quantile function
$F^{-1}(p)= \inf \{ x \in \R^+ : p \leq F(x) \}$. 
Define
\begin{eqnarray*}
W_1 = F^{-1}\left(1 - \frac{\omn}{n} \right) 
& \mbox{and} &  
W_2 = F^{-1}\left(1 - \frac{1}{n \, \omn} \right).
\end{eqnarray*}
We have
$\pr[W_{\max} \leq W_1] = (1-\omn/n)^n \leq \exp(- \omn) \rightarrow 0$ and
$\pr[W_{\max} \leq W_2] = (1- 1/n \,  \omn)^n \geq \exp(- 1/ \omn + 1/n \, \omn^2) \rightarrow 1$.
In conclusion,  the maximal weight $W_{\max}$ satisfies 
\begin{equation}
\label{eq:maxweight}
\lim_{n \to +\infty} \pr[W_{\max} \in(W_1,W_2)] = 1.
\end{equation}
See~\ref{sec:examples} for concrete examples of the calculation of the bounds on the maximal weight.

Let us determine the upper and lower bounds for the node degrees, keeping in mind that the weight distribution has finite mean and variance.
We consider the GTG around the connectivity regime, as described in Theorem \ref{thm:connect}. 
The next result generalizes Lemma 3 in \cite{cooperfrieze:cover}, which shows that all degrees of RGGs near the connectivity threshold are $\Theta(\log n)$ whp.

\begin{lemma}
\label{thm:gtginterval}
Let $G$ be a connected GTG with threshold function $\theta_n = c n / \log n$.
Whp, the nodes $v \in V(G)$ satisfy $\deg(v) \in I_{GTG}$ where
\begin{equation}
\label{eqn:interval}
I_{GTG} = \left[  c_1  \log n, c_2  \,  F^{-1}\left(1- \frac{1}{n \, \omn}\right)   \log n \right],
\end{equation}
 for any function $\omn = \omega(1)$ and for constants 
$$
c_1 =  \frac{ \mu \Upsilon_d}{ c }   \left(1 -  \sqrt{\frac{2c}{ \mu \Upsilon_d}}\, \right) \mbox{ and }  
c_2 = \frac{2 \Upsilon_d}{c}.
$$
\end{lemma}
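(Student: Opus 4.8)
The plan is to prove the two inequalities of the interval separately, treating the upper bound and the lower bound on $\deg(v)$, and to leverage the binomial characterization \eqref{eq:degreedist} together with the bounds on $W_{\max}$ from \eqref{eq:maxweight}. First I would recall that conditioned on the weight $w$ of a vertex $v$, we have $\deg(v \mid w) \sim \Bin(n-1, p(w))$ with $p(w) = \frac{\Upsilon_d}{\theta_n}(w + \mu)$, and that $\theta_n = cn/\log n$, so $p(w) = \frac{\Upsilon_d (w+\mu)}{c}\cdot\frac{\log n}{n}$ and hence $\E[\deg(v)\mid w] = (n-1)p(w) \sim \frac{\Upsilon_d(w+\mu)}{c}\log n$. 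This immediately gives the right order of magnitude $\Theta(\log n)$ for bounded weights, and the two constants $c_1, c_2$ should drop out of a Chernoff bound applied to this binomial.

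For the \textbf{upper bound}, the largest degree is achieved (up to constants) by the heaviest vertex. By \eqref{eq:maxweight}, whp every weight satisfies $w \le W_2 = F^{-1}(1 - 1/(n\,\omn))$. Plugging $w \le W_2$ into the mean gives $\E[\deg(v)\mid w] \le (1+o(1))\frac{\Upsilon_d(W_2+\mu)}{c}\log n \le \frac{2\Upsilon_d}{c} W_2 \log n = c_2 F^{-1}(1-1/(n\omn))\log n$ for $n$ large (absorbing $\mu$ and the $(1+o(1))$ factor into the slack between $\Upsilon_d/c$ and $2\Upsilon_d/c$, using that $W_2 \to \infty$ or is at least bounded below). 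Then a union bound over the $n$ vertices of an upper Chernoff tail for $\Bin(n-1,p(w))$ — whose mean is $\Theta(\log n)$, large enough that the Chernoff bound beats the factor $n$ — shows no vertex exceeds $c_2 W_2 \log n$ whp. One subtlety: a vertex's degree depends on its own weight, which is random, so I would condition on the (whp) event $\{W_{\max}\le W_2\}$ and on the weight profile, then apply Chernoff to the remaining randomness (locations and the edge-indicator structure), which is exactly what \eqref{eq:degreedist} packages.

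For the \textbf{lower bound}, I want $\deg(v) \ge c_1 \log n$ for every $v$, including the lightest vertices. The worst case is $w = 0$ (or $w\to 0$), giving $\E[\deg(v)\mid 0] \sim \frac{\mu\Upsilon_d}{c}\log n$. A lower Chernoff bound gives $\deg(v) \ge (1-\eta)\frac{\mu\Upsilon_d}{c}\log n$ with failure probability roughly $\exp(-\eta^2 \cdot \frac{\mu\Upsilon_d}{c}\log n / 2) = n^{-\eta^2 \mu\Upsilon_d/(2c)}$; choosing $\eta = \sqrt{2c/(\mu\Upsilon_d)}$ makes this $n^{-1}\cdot(\text{slowly growing})$... — actually I need the exponent strictly larger than $1$, so I would instead take $\eta$ slightly larger, or note that the connectivity hypothesis forces $c$ small enough ($c < \sup_\alpha \alpha F^{-1}(1-\alpha)/4$, so in particular $2c/(\mu\Upsilon_d) < 1$ comfortably) that the exponent exceeds $1$; then $(1-\eta) = 1 - \sqrt{2c/(\mu\Upsilon_d)}$ is exactly the factor appearing in $c_1$, and a union bound over all $n$ vertices finishes the lower bound. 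I expect the main obstacle to be the bookkeeping around the dependence between a vertex's own weight and its degree, and ensuring the Chernoff exponents genuinely beat the union-bound factor of $n$ uniformly — i.e., verifying that the constraint on $c$ from Theorem \ref{thm:connect} is strong enough to give exponents $>1$ rather than $\ge 1$; this may require replacing the clean constants by $(1\pm\eps)$-perturbed versions and invoking \eqref{eq:epnu}, or absorbing the discrepancy into the $\omn$ factor already present in the statement.
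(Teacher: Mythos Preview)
Your approach is essentially the paper's: apply Chernoff to the conditional binomial $\deg(v\mid w)\sim\Bin(n-1,p(w))$, handle the upper bound via the whp event $W_{\max}\le W_2=F^{-1}(1-1/(n\,\omn))$ together with concentration of the heaviest node's degree around its mean, and handle the lower bound by a uniform-in-$w$ lower-tail Chernoff followed by a union bound over the $n$ vertices. For the exponent concern you flag, the paper parametrizes the Chernoff exponent as $\phi(x)(1-1/n)\log n$ with $x=1+w/\mu$, $c_3=1-\sqrt{2c/(\mu\Upsilon_d)}$, and $\phi(x)=\tfrac{\mu\Upsilon_d}{c}\,x\,(1-c_3/x)^2$; it then notes that $\phi$ is increasing on $[c_3,\infty)\supset[1,\infty)$ with $\phi(1)=2$, so the tail bound is $o(1/n)$ uniformly in $w\ge 0$ for $n\ge 3$, after which integrating against $f(w)$ and union-bounding finishes. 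Your ``worst case $w=0$'' shortcut is exactly the observation that $\phi$ attains its minimum on $[1,\infty)$ at $x=1$.
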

The minimum degree of a GTG is $\Omega(\log n)$. The maximum degree depends upon the decay rate of the weight distribution: a slower decay rate results in larger maximum degree.
In  \ref{sec:examples}, we calculate $I_{GTG}$ for weight distributions with exponential decay and polynomial decay.

\begin{proof}
Using equation \eqref{eq:degreedist},
we apply the Chernoff bound on the degree $\deg(v|w)$ of a node $v$ with a given weight $w$:
\[
\pr \Big[\deg(v|w) \leq (1 - \delta)  \E[\deg(v|w)] \Big] \leq \exp( - \E[\deg(v|w)] \delta^2/2).
\]
Let $c_3 = 1-  \sqrt{\frac{2c}{ \mu \Upsilon_d}} < 1$.
By having $\E[\deg(v|w)] = (n-1)p(w) = \frac{\Upsilon_d(n-1)}{cn} (w + \mu) \log n $ 
and choosing 
$\dl = 1 - c_3/(1 + w/\mu)$, 
it follows
\begin{equation}
\label{eq:bound_deg_distrib_exp}
\pr \Big[ \deg(v|w) \leq c_1 \log n \Big]  \leq \exp \left(  - \frac{\mu \Upsilon_d}{c} \left( 1 + w/\mu \right) \left( 1 - \frac{c_3}{ 1 + w/\mu} \right)^2 \left(1 - \frac{1}{n} \right)  \log n \right).
\end{equation}

Next, we specify conditions such that  equation (\ref{eq:bound_deg_distrib_exp}) is $o(1/n)$ for all $w \geq 0$ and sufficiently large $n$. For the sake of simplicity, let us denote $x= 1 + w/\mu \geq 1$,  and consider the function 
$$\phi(x) = \frac{\mu \Upsilon_d}{c} x \left(1 - \frac{c_3}{x} \right)^2.$$ 
The minimum of $\phi(x)$ is attained at $x = c_3$. Moreover, $\phi(x)$ is strictly decreasing on $(0,c_3)$ and strictly increasing on $(c_3, + \infty)$. Because $c_3  < 1$ and $\phi(1) = \frac{\Upsilon_d \mu}{c}(1 - c_3)^2 = 2$, we know that
 $\phi(x) \geq 2$ for $x \geq 1$. That is, equation (\ref{eq:bound_deg_distrib_exp}) is $o(1/n)$, for  $n \geq 3$. 
Thus, the degree distribution satisfies 
\begin{eqnarray*}
 \pr \Big[\deg(v) \leq c_1 \log n\Big] &=& \int  f(w) \pr \Big[\deg(v|w) \leq c_1 \log n\Big] dw \\
&=& o \Big( n^{-1} \int  f(w) dw \Big) = o(1/n).
\end{eqnarray*}
The union bound gives the lower bound on degree of the nodes in the graph.

We now obtain the upper bound. Equation (\ref{eq:maxweight}) ensures 
$F^{-1} \left(1 - \omn/n \right) \leq W_{\max} \leq F^{-1}\left(1 - 1/ n \omn \right).$ 
Moreover, by the continuity of $F^{-1}(x),$  for any $\epsilon>0$, there is sufficiently large $n=n(\epsilon)$, such that the upper and lower bounds on $W_{\max}$ are arbitrarily close $0 \leq W_2 - W_1 \leq \epsilon$.

The degree of the node with maximal weight satisfies the binomial distribution $\Bin(n-1, (\Upsilon_d/\theta_n)(W_{\max}  + \mu))$, which is concentrated around its mean $(\Upsilon_d/c) (1 - 1/n) (W_{\max} + \mu) \log n $. Finally, the union bound gives the upper bound on the degrees.
\end{proof}

We now partition the interval $I_{GTG}$ of equation \eqref{eqn:interval}. We use this partition to calculate the number of edges $|E(G)|$ and again in Section~ \ref{sec:mixing_time_for_GTG} to bound the mixing time. 

Define $h(x) = (1 - F(x))^{-1}$, or equivalently,  $F(x) = 1 - 1/h(x)$. This is also equivalent to
$$h^{-1}(y) = F^{-1}(1 - 1/y).$$
By assumption we have  $h(x) = \Omega(x^{d + \nu})$, so that $h^{-1}(x) = O \left(x^{1/(d + \nu)} \right)$. Our first interval $B$ contains the low weight nodes: 
\begin{equation}
\label{eq:defB}
B = \{ v \in V(G) \mid w_v \leq F^{-1}(1-\alpha) \} = \{ v \in V(G) \mid F(w_v) \leq 1 - \alpha \}.
\end{equation}
Next, we partition the $\alpha n$ nodes with weights in $[F^{-1}(1-\alpha), W_{\max}]$.
By equation \eqref{eq:maxweight}, $W_{\max} = F^{-1}\left(1 - 1/n\omn\right) = h^{-1}(n \omn)$.  Let
\begin{eqnarray*}
a_0 &= & W_{\max} \\
a_k &=& \max \left\{ F^{-1} \left( 1 - a_{k-1}^{-(1+\epsilon)} \right), F^{-1}(1-\alpha) \right\}, \textrm{ for } k \geq 1.
\end{eqnarray*}
The $a_k$ are only defined until we reach $F^{-1}(1-\alpha)$. Call this final index $M$. Our partition consists of the subintervals of the form $(a_k, a_{k-1}]$ for $1 \leq k \leq M$. Note that the indexing of our endpoints is the reverse of the standard convention.
\begin{lemma} 
\label{lemma:M}
The final index satisfies $M = o(\log n)$.
\end{lemma}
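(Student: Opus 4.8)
The plan is to rewrite the defining recursion in terms of the inverse tail function and observe that it contracts geometrically on the logarithmic scale. Since $h^{-1}(y) = F^{-1}(1-1/y)$, as long as $a_{k-1}$ lies strictly above the floor, i.e. $a_{k-1} > \alpha^{-1/(1+\epsilon)}$, the maximum in the definition of $a_k$ is attained by its first argument and
\[
a_k = F^{-1}\!\left(1 - a_{k-1}^{-(1+\epsilon)}\right) = h^{-1}\!\left(a_{k-1}^{1+\epsilon}\right).
\]
By the standing hypothesis $\pr[W \geq x] = O(x^{-(d+\nu)})$ we have $h^{-1}(y) = O(y^{1/(d+\nu)})$, so there is a constant $C$ and a constant threshold $x_0$ with $a_k \leq C\, a_{k-1}^{\beta}$ whenever $a_{k-1} \geq x_0$, where $\beta := (1+\epsilon)/(d+\nu)$. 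Because $\epsilon < \nu/(2d) < \nu$ by \eqref{eq:epnu} and $d \geq 2$, we have $1+\epsilon < 1 + \nu < d + \nu$, hence $\beta < 1$ (in fact bounded away from $1$ once $\nu$ is bounded).

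Next I pass to logarithms: $\log a_k \leq \beta \log a_{k-1} + \log C$. Enlarging the threshold $x_0$ to a larger constant $x_*$ (so that in addition $\log C \leq \tfrac{1-\beta}{2}\log a_{k-1}$ whenever $a_{k-1}\geq x_*$) gives $\log a_k \leq q\,\log a_{k-1}$ with $q := \tfrac{1+\beta}{2}\in(\beta,1)$, valid while $a_{k-1}\geq x_*$. Iterating from $a_0 = W_{\max}$, we obtain $\log a_k \leq q^{\,k}\log W_{\max}$ for as long as the iterates stay $\geq x_*$. By \eqref{eq:maxweight} we have, whp, $W_{\max} \leq F^{-1}(1 - 1/(n\omn)) = h^{-1}(n\omn) = O\!\left((n\omn)^{1/(d+\nu)}\right)$, and hence $\log W_{\max} = O(\log n)$ since $\omn$ is slowly increasing. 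Therefore $q^{\,k}\log W_{\max}$ drops below the constant $\log x_*$ as soon as $k > (\log\log W_{\max} - \log\log x_*)/\log(1/q)$, i.e. after $O(\log\log n)$ steps; so within $O(\log\log n)$ steps the sequence either has already reached the floor (giving $M = O(\log\log n)$) or has fallen to a value below $x_*$.

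It remains to handle the short ``constant band'' between $x_*$ and the floor $F^{-1}(1-\alpha)$, where the crude power-law estimate no longer forces the iterates to keep decreasing; this certification that the recursion genuinely terminates at the floor is, I expect, the only delicate point. The cleanest route is to take the constant $\alpha$ small enough that $\alpha^{-1/(1+\epsilon)} \geq x_*$, so that this band is empty and the geometric estimate stays in force all the way down to the stopping value $\alpha^{-1/(1+\epsilon)}$; alternatively, and with no condition on $\alpha$, one simply absorbs the leftover interval $(F^{-1}(1-\alpha), a_{k^*}]$ — where $k^* = O(\log\log n)$ is the first index with $a_{k^*} < x_*$ — into a single additional subinterval of the partition. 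Either way $M$ changes by at most $O(1)$, and combined with the previous paragraph this yields $M = O(\log\log n) = o(\log n)$, whp; the substance of the lemma is precisely this doubly-logarithmic count, which comes from the geometric contraction of $\log a_k$.
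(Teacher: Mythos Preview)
Your proposal is correct and follows essentially the same approach as the paper: both rewrite the recursion as $a_k = h^{-1}(a_{k-1}^{1+\epsilon})$, use the tail hypothesis to get $a_k = O(a_{k-1}^{\beta})$ with $\beta = (1+\epsilon)/(d+\nu) < 1$, and conclude $M = O(\log\log n)$ from the geometric contraction on the log scale. Your treatment is slightly more explicit about the multiplicative constant (absorbing $\log C$ into a looser ratio $q=(1+\beta)/2$) and about the final ``constant band'' near $F^{-1}(1-\alpha)$, whereas the paper simply writes $a_k = O(W_{\max}^{\beta^k})$ and notes $M=O(N)$ for $N$ with $(n\omn)^{\beta^N/(d+\nu)} < F^{-1}(1-\alpha)$; these are the same argument at different levels of bookkeeping.
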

\begin{proof}
We have $a_0 = W_{\max} = h^{-1} (n \omn) = O \left( (n \omn)^{1/(d+\nu)} \right)$. Let $\beta = (1+\ep)/(d + \nu) < 1$. By induction, for 
$0 <  k < M$, 
$$a_k = 
F^{-1} \left( 1 - a_{k-1}^{-(1+\epsilon)} \right) =
h^{-1}\left(a_{k-1}^{1+\ep}\right) = O\left( W_{\max}^{\beta^k} \right)
= O\left( (n \omn)^{\beta^k/(d + \nu)} \right).$$
By definition, $a_M = F^{-1}(1-\alpha)$. Let $N$ be the smallest integer such that $ (n \omn)^{\beta^N/(d + \nu)} < F^{-1}(1-\alpha)$. The right hand side is a constant, so
$M = O(N) = O( \log \log (n \omn)) = o (\log n)$.
\end{proof}

For $1\leq k \leq M$, let 
\begin{equation}
\label{eq:defAk}
A_k = \left\{ v \in V(G) \mid w_v \in (a_k, a_{k-1}] \right\}.
\end{equation}
The degree of $v \in A_k$ with weight $w_v$ is 
\begin{equation}
\label{eqn:degree}
\deg(v) = O(  (1 + w_v) \log n) = O(a_{k-1} \log n).
\end{equation}
We now show that the sizes of the $A_k$ are all concentrated around their means.

\begin{lemma}
\label{lemma:size-Ak}
Whp, for all $1 \leq k \leq M$, we simultaneously have
$|A_k| = \Theta \left( n/a_{k-1}^{1+\epsilon} \right).$
\end{lemma}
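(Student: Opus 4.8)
The plan is to establish both the upper and lower bounds on $|A_k|$ via a routine second-moment/concentration argument, but with careful attention to making the estimates uniform over all $1 \le k \le M$. First I would compute the expectation: since each node independently has weight in the range $(a_k, a_{k-1}]$ with probability $p_k = F(a_{k-1}) - F(a_k)$, we have $|A_k| \sim \Bin(n, p_k)$, and $\E|A_k| = n p_k$. By the definition of the sequence, $1 - F(a_{k-1}) = a_{k-1}^{-(1+\eps)}$ for $k-1 \ge 1$ (and for $k=1$ we use $1 - F(a_0) = 1-F(W_{\max})$, which is handled separately but is on the order of $1/(n\omn)$, negligibly small compared to what follows, or absorbed into constants), while $1 - F(a_k) = a_k^{-(1+\eps)} \ge a_{k-1}^{-(1+\eps)}$ since $a_k \le a_{k-1}$. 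The key observation is that consecutive endpoints are polynomially related: from Lemma~\ref{lemma:M}, $a_k = O(a_{k-1}^{\beta})$ with $\beta = (1+\eps)/(d+\nu) < 1$, so $a_k^{-(1+\eps)}$ is a much larger (and for the ratio, negligible correction to the dominant) term — more precisely, $p_k = a_k^{-(1+\eps)} - a_{k-1}^{-(1+\eps)} = \Theta(a_k^{-(1+\eps)})$ is \emph{not} what we want; rather we need to double-check which endpoint controls the probability mass. Re-examining: $a_{k-1}$ is the \emph{larger} weight, so $F(a_{k-1})$ is closer to $1$, and $p_k = (1 - F(a_k)) - (1 - F(a_{k-1})) = a_k^{-(1+\eps)} - a_{k-1}^{-(1+\eps)}$; since $a_k \le a_{k-1}$ this is $\Theta(a_k^{-(1+\eps)})$. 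To reconcile with the claimed $\Theta(n / a_{k-1}^{1+\eps})$, I would use the recursion $a_k = \max\{F^{-1}(1 - a_{k-1}^{-(1+\eps)}), F^{-1}(1-\alpha)\}$: for $k < M$ the max is achieved by the first term, giving $1 - F(a_k) \le a_{k-1}^{-(1+\eps)}$, hence $a_k^{-(1+\eps)} \ge$ wait — $h$ is increasing so $1-F$ is decreasing, thus $1-F(a_k) = a_{k-1}^{-(1+\eps)}$ exactly when the first term wins, i.e.\ $a_k^{-(1+\eps)} $ is \emph{not} $a_{k-1}^{-(1+\eps)}$ unless $F(x) = 1 - x^{-(1+\eps)}$; in general $h(a_k) = a_{k-1}^{1+\eps}$, i.e.\ $1-F(a_k) = a_{k-1}^{-(1+\eps)}$. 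So actually $p_k = (1-F(a_k)) - (1-F(a_{k-1})) = a_{k-1}^{-(1+\eps)} - (1-F(a_{k-1}))$, and since $1-F(a_{k-1}) = a_{k-2}^{-(1+\eps)}$ which is $O(a_{k-1}^{-\beta^{-1}(1+\eps)}) = o(a_{k-1}^{-(1+\eps)})$ because $a_{k-2} = \Omega(a_{k-1}^{1/\beta})$ with $1/\beta > 1$. Hence $p_k = \Theta(a_{k-1}^{-(1+\eps)})$ and $\E|A_k| = \Theta(n/a_{k-1}^{1+\eps})$, as claimed.

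Second, I would handle concentration. For each fixed $k$, since $|A_k| \sim \Bin(n, p_k)$, the Chernoff bound gives $\pr[\,|\,|A_k| - np_k| \ge \tfrac12 np_k\,] \le 2\exp(-c\, np_k)$ for an absolute constant $c > 0$. The crucial point is that $np_k = \Theta(n / a_{k-1}^{1+\eps})$ and $a_{k-1} \le W_{\max} = O((n\omn)^{1/(d+\nu)})$, so $a_{k-1}^{1+\eps} = O((n\omn)^{(1+\eps)/(d+\nu)}) = O((n\omn)^{\beta})$ with $\beta < 1$; therefore $np_k = \Omega(n^{1-\beta}/\omn^{\beta}) = \Omega(n^{1-\beta-o(1)})$, which is polynomially large in $n$. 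Consequently each failure probability is $\exp(-\Omega(n^{1-\beta-o(1)}))$, which is far smaller than $1/n$ — indeed smaller than $1/M$ for any polynomial number of events. Then I union-bound over the at most $M = o(\log n)$ values of $k$: the total failure probability is at most $M \cdot \exp(-\Omega(n^{1-\beta-o(1)})) = o(1)$, giving the simultaneous statement.

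The main obstacle — and where I would be most careful — is the boundary case $k = M$, where $a_M = F^{-1}(1-\alpha)$ is a constant and the interval $(a_M, a_{M-1}]$ may not satisfy the clean recursion $h(a_M) = a_{M-1}^{1+\eps}$ (the $\max$ is achieved by the constant term $F^{-1}(1-\alpha)$). In that case $p_M = (1 - F(a_M)) - (1 - F(a_{M-1})) = \alpha - a_{M-2}^{-(1+\eps)} = \Theta(1)$ since $a_{M-2}$ grows; so $\E|A_M| = \Theta(n)$, and we should check this is still $\Theta(n/a_{M-1}^{1+\eps})$ — it is, because $a_{M-1}$ is bounded above by a constant (namely by $N$-ish from Lemma~\ref{lemma:M}, since $a_{M-1}$ is the last term still exceeding $F^{-1}(1-\alpha)$ and is $O(1)$). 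A secondary subtlety is the very first interval $k=1$, where $a_0 = W_{\max}$ is itself random; here I would condition on the high-probability event $W_{\max} \in (W_1, W_2)$ from \eqref{eq:maxweight} and note that $1 - F(W_{\max}) \le 1/(n\omn) = o(p_1)$, so it contributes negligibly to $p_1 = \Theta(a_0^{-(1+\eps)})$ and the concentration argument goes through verbatim on this conditional event. Everything else is standard Chernoff bookkeeping; the only real content is verifying that the geometric decay of the $a_k$ keeps every $np_k$ polynomially large so that the union bound over $M = o(\log n)$ intervals costs nothing.
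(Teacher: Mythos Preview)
Your argument is correct and follows essentially the same route as the paper: compute $\E|A_k| = \Theta(n/a_{k-1}^{1+\eps})$ from the recursion $1-F(a_k)=a_{k-1}^{-(1+\eps)}$, apply a concentration inequality, and union-bound over the $M=o(\log n)$ sets. The only difference is that the paper uses Chebyshev (obtaining a $O(n^{-1/2})$ failure probability, which already suffices for the union bound) whereas you use Chernoff to get an exponentially small bound; your treatment of the boundary cases $k=1$ and $k=M$ is also more explicit than the paper's.
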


\begin{proof}
We have  $\Pr[W > a_k] = 1/a_{k-1}^{1+\ep}$. Therefore 
$ \E[|A_1|] = n/a_0^{1+\epsilon}$ and
$ \E[|A_k|] = n/a_{k-1}^{1+\epsilon} - n/a_{k-2}^{1+\epsilon} =
\Theta \left( n/a_{k-1}^{1+\epsilon} \right)$
for $2 \leq k \leq M$.

As for the concentration of these values, we consider the most delicate case of $A_1$.
We have $\Pr[v \in A_1] = \Pr[w_v > a_1] = a_0^{-(1+\ep)} = W_{\max}^{-(1+\ep)}  := q_1$. We can consider  
$A_1$ as being generated from $\Bin(n, q_1)$. The Chebyshev inequality gives
\[
\Pr \big[ \left| |A_1| - nq_1 \right| \geq nq_1 \big] \leq \frac{1}{nq_1} = 
O \left( \frac{( n \omn) ^{(1 + \ep)/(d+\nu)}}{n} \right) = O(n^{-1/2}).
\]
Similarly, the probability that each remaining $|A_k|$ is more than twice its mean is $O(n^{-1/2})$. There are $M=o(\log n)$ such sets, so taking a union bound shows that all of them are concentrated whp.
\end{proof}

See~\ref{sec:examples} for concrete examples of the partition of the nodes according to weight.

\begin{lemma}
\label{lemma:sum}
$\sum_{k=0}^M a_k^{-\epsilon} = \Theta(1)$.
\end{lemma}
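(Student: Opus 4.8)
The plan is to sandwich the sum between two absolute constants. For the lower bound, note that whp $W_{\max} > F^{-1}(1-\alpha)$, so $M\ge 1$, and since $a_0 > a_1 > \cdots > a_M$ the largest term of the sum is $a_M^{-\epsilon} = (F^{-1}(1-\alpha))^{-\epsilon}$, a fixed positive constant (continuity of the weight law gives $0 < F^{-1}(1-\alpha) < \infty$). Hence $\sum_{k=0}^M a_k^{-\epsilon} = \Omega(1)$, and the real work is the upper bound $\sum_{k=0}^M a_k^{-\epsilon} = O(1)$.

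For the upper bound the key point is that the $a_k$ contract so fast that $a_k^{-\epsilon}$ grows at least doubly-exponentially as $k$ decreases from $M$, so the sum is controlled by its last term. I would first record that for $1\le k<M$ the definition of $a_k$ gives $\Pr[W\ge a_k] = 1-F(a_k) = a_{k-1}^{-(1+\epsilon)}$, while at $k=M$ it gives $\Pr[W\ge a_M]=\alpha\ge a_{M-1}^{-(1+\epsilon)}$; combined with $\Pr[W\ge x] = O(x^{-(d+\nu)})$ this yields $a_{k-1}^{-(1+\epsilon)} = O(a_k^{-(d+\nu)})$, i.e.
\[
a_k \le D\,a_{k-1}^{\beta}\qquad (1\le k\le M)
\]
for a constant $D$ and $\beta := (1+\epsilon)/(d+\nu) < 1$ (note $1+\epsilon<1+\nu\le d+\nu$, just as in Lemma~\ref{lemma:M}). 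Setting $x^* := \ln D/(1-\beta)$ and $b_k := \ln a_k - x^*$, this reads $b_k\le\beta\,b_{k-1}$, hence $b_{k-1}\ge\beta^{-1}b_k$ and, iterating, $b_{M-j}\ge\beta^{-j}b_M$ for $0\le j\le M$. In the generic case $b_M = \ln F^{-1}(1-\alpha) - x^* > 0$, every $b_k\ge b_M>0$ (because $a_M$ is the smallest $a_k$), so
\[
\sum_{k=0}^{M} a_k^{-\epsilon} = e^{-\epsilon x^*}\sum_{j=0}^{M}e^{-\epsilon b_{M-j}} \le e^{-\epsilon x^*}\sum_{j=0}^{\infty}e^{-\epsilon b_M\beta^{-j}} = O(1),
\]
the series converging because $\beta^{-j}$ grows geometrically in $j$.

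The step I expect to be the main obstacle is the degenerate case $b_M\le 0$, which happens precisely when $F^{-1}(1-\alpha)$ is small compared with the implied constant $D$; here the contraction $a_k\le Da_{k-1}^{\beta}$ does not by itself force the $a_k$ to separate multiplicatively while they are of order $1$. I would resolve it by fixing a constant $K>\max\{F^{-1}(1-\alpha),\,e^{x^*}\}$, letting $k^*$ be the last index with $a_{k^*}\ge K$, running the previous paragraph's estimate on $\sum_{k\le k^*}a_k^{-\epsilon}$ with $\ln K - x^*>0$ playing the role of $b_M$, and bounding the tail $\sum_{k^*<k\le M}a_k^{-\epsilon}$ crudely: each term is at most $(F^{-1}(1-\alpha))^{-\epsilon}=O(1)$, and there are only $O(1)$ such indices because the strictly decreasing sequence $a_{k^*+1}>\cdots>a_M$ lies in the compact interval $[F^{-1}(1-\alpha),K]$, where — using continuity of $F$ together with the finiteness of $M$, which forces $x\mapsto F^{-1}(1-x^{-(1+\epsilon)})$ to strictly decrease $x$ — consecutive terms are separated by a positive constant. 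Adding the two contributions again gives $O(1)$, which completes the proof.
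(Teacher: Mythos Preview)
Your argument is correct and rests on the same underlying observation as the paper's: because $\Pr[W\ge x]=O(x^{-(d+\nu)})$ with $d+\nu>1+\epsilon$, the $a_k$ grow super-geometrically as $k$ decreases, so $\sum a_k^{-\epsilon}$ is dominated by its last term. The presentations differ, however. The paper introduces the reversed comparison sequence $b_0=F^{-1}(1-\alpha)$, $b_k=h(b_{k-1})^{1/(1+\epsilon)}$, verifies $b_k\le a_{M-k-1}$, and then shows $\sum_{k\ge 0} b_k^{-\epsilon}<\infty$ by d'Alembert's ratio test; since this infinite series is independent of $n$, no case analysis is needed. Your log-linearization $b_k=\ln a_k-x^*$ is equivalent in spirit but forces you into the generic/degenerate split, because the fixed point $x^*$ of the upper-bound recursion need not lie below $\ln F^{-1}(1-\alpha)$. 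Your handling of the degenerate tail is sound (the map $g(x)=F^{-1}(1-x^{-(1+\epsilon)})$ and the interval $[F^{-1}(1-\alpha),K]$ are independent of $n$, and the well-definedness of $M$ rules out a fixed point of $g$ above $F^{-1}(1-\alpha)$, so compactness gives a uniform positive gap and hence $O(1)$ iterates there), but the paper's ratio-test route reaches the same conclusion more economically.

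One small slip: at $k=M$ the definition gives $F^{-1}(1-a_{M-1}^{-(1+\epsilon)})\le F^{-1}(1-\alpha)$, hence $\alpha\le a_{M-1}^{-(1+\epsilon)}$, not $\alpha\ge a_{M-1}^{-(1+\epsilon)}$ as you wrote. This does not damage the argument: simply separate off the single term $a_M^{-\epsilon}=(F^{-1}(1-\alpha))^{-\epsilon}=O(1)$ and run the contraction $a_k\le D\,a_{k-1}^\beta$ only for $1\le k\le M-1$, where it follows directly from $\Pr[W\ge a_k]=a_{k-1}^{-(1+\epsilon)}$.
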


\begin{proof}
We use the general d'Alembert's convergence criterion:  The sum of positive terms
$\sum_{k=0}^{\infty} c_k$ is convergent if there exist a positive integer $N$ and $\eta >0$ such that  $k>N$ guarantees $c_{k}/c_{k-1} < 1 - \eta$. 

Let $b_0 = F^{-1}(1-\alpha)$ and $b_k = h(b_{k-1})^{1/(1+\epsilon)}$
 for $1 \leq k \leq M$. We have $b_k \leq a_{M-k-1}$ for $0 \leq k \leq M$. It follows that
 \[
 \sum_{k=0}^M \frac{1}{a_k^{\epsilon}} \leq \frac{1}{a_M^{\epsilon}} + \sum_{k=0}^{\infty} \frac{1}{b_k^{\epsilon}}
 \leq 1 + \sum_{k=0}^{\infty} \frac{1}{b_k^{\epsilon}}.
 \]
and
 \[
\frac{1/b_{k}^{\epsilon}}{1/b_{k-1}^{\epsilon} }
=  \left( \frac{b_{k-1}}{b_k} \right)^{\epsilon} = 
\left( \frac{b_{k-1}}{h(b_{k-1})^{1/(1+\ep)}} \right)^{\epsilon} < 1 - \eta
\]
for large $k$ since $h(x) \gg x^{d + \nu} \gg x^{1+\epsilon}$. Therefore this sum converges to a constant (independent of $n$).
\end{proof}

\begin{lemma}
\label{thm:edges}
If $G$ is a GTG at the connectivity threshold then the number of edges $|E(G)| = \Theta(n \log n)$ whp. 
\end{lemma}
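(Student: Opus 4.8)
The plan is to use the handshake identity $|E(G)| = \tfrac12 \sum_{v \in V(G)} \deg(v)$ and bound this sum from both sides. The lower bound is immediate from Lemma~\ref{thm:gtginterval}: whp every node has $\deg(v) \ge c_1 \log n$, so $|E(G)| \ge \tfrac{c_1}{2}\, n \log n = \Omega(n\log n)$.

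For the upper bound I would use the weight partition $V(G) = B \cup A_1 \cup \cdots \cup A_M$, which is valid whp since $W_{\max} \le a_0$ by \eqref{eq:maxweight}, and bound the degree sum over each block separately. For the low weight block $B$, every $v \in B$ has $w_v \le F^{-1}(1-\alpha) = O(1)$, so by \eqref{eq:degreedist} its degree is $\Bin\bigl(n-1, p(w_v)\bigr)$ with mean $\tfrac{\Upsilon_d(n-1)}{cn}(w_v+\mu)\log n = O(\log n)$; a Chernoff tail bound makes $\deg(v) = O(\log n)$ with probability $1 - o(1/n)$, and a union bound over the at most $n$ vertices of $B$ gives $\sum_{v \in B} \deg(v) = O(n \log n)$ whp. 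For the high weight blocks, \eqref{eqn:degree} gives $\deg(v) = O(a_{k-1}\log n)$ for $v \in A_k$ and Lemma~\ref{lemma:size-Ak} gives $|A_k| = O\bigl(n/a_{k-1}^{1+\epsilon}\bigr)$, hence $\sum_{v \in A_k} \deg(v) = O\bigl(n\, a_{k-1}^{-\epsilon}\log n\bigr)$; summing over $k$ and reindexing, $\sum_{k=1}^{M}\sum_{v\in A_k}\deg(v) = O\bigl(n\log n \sum_{k=0}^{M-1} a_k^{-\epsilon}\bigr) = O(n\log n)$ by Lemma~\ref{lemma:sum}. Adding the two contributions and halving yields $|E(G)| = O(n\log n)$, which with the lower bound gives $|E(G)| = \Theta(n\log n)$ whp.

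The one step I would be most careful about is the union bound over node degrees: I need the upper Chernoff tail of a $\Bin(n-1,p(w))$ variable with mean $\Theta\bigl((1+w)\log n\bigr)$ to be $o(1/n)$ uniformly in $w$, so that the degree upper bounds used for $B$ and in \eqref{eqn:degree} hold simultaneously for all $n$ nodes. This works because $\pr[X \ge K(1+w)\log n] \le \bigl(O(1/K)\bigr)^{K(1+w)\log n}$ is super-polynomially small once the hidden constant $K$ is taken large enough, so it is insensitive to the precise value of $c$. Everything else is bookkeeping using results already established, in particular the convergence of $\sum_{k} a_k^{-\epsilon}$ from Lemma~\ref{lemma:sum}, which is exactly what prevents the (possibly polynomially large) degrees of the heaviest nodes from overwhelming the $n \log n$ budget.
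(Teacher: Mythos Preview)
Your proposal is correct and follows essentially the same route as the paper: lower bound from the uniform $\Omega(\log n)$ degree in Lemma~\ref{thm:gtginterval}, and upper bound by splitting $2|E(G)|=\sum_v \deg(v)$ over the partition $B, A_1,\ldots,A_M$, using $|A_k|=O(n/a_{k-1}^{1+\epsilon})$ and $\deg(v)=O(a_{k-1}\log n)$ on $A_k$, then invoking Lemma~\ref{lemma:sum} to collapse $\sum_k a_{k-1}^{-\epsilon}$ to $O(1)$. Your extra care about making the per-node Chernoff upper tails $o(1/n)$ uniformly in $w$ is a point the paper leaves implicit in equation~\eqref{eqn:degree}, so your write-up is, if anything, slightly more explicit.
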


\begin{proof}
Lemma \ref{thm:gtginterval} guarantees that all nodes have degree $\Omega( \log n)$ whp, 
so the number of edges $|E(G)| = \Omega(n \log n)$. As for the upper bound, whp
\begin{eqnarray*}
2|E(G)|
 \!\! & = & \! \sum_{v \in B} \deg(v) + \sum_{k=1}^{M} \sum_{v \in A_k} \! \deg(v) 
\, = \, O \Big( \Big( |B|  + \sum_{k=1}^{M} |A_k|  a_{k-1} \Big) \log n \Big) \\
&=& O \Big(
\Big( \alpha n + \sum_{k=1}^M \frac{n}{a_{k-1}^{1+\ep}}  a_{k-1} \Big) \log n 
\Big) 
\, = \, O(n \log n),
\end{eqnarray*}
by Lemma \ref{lemma:sum}.
\end{proof}

\section{Canonical paths for GTG}
\label{sec:canonical}

We employ canonical paths (as introduced in  \cite{jerrumsinclair}) to calculate our bound on the mixing time. In this section, we construct the canonical paths for $G$,  a connected GTG with threshold function $\theta_n = c n / \log n$.
For every ordered pair of nodes $u,v \in V(G)$ we choose a canonical path $\gamma_{uv}$ between them. We define 
\begin{eqnarray}
\label{eq:rho}
\rho 
&=& \max_{e=\{x,y\} \in E(G)} \frac{1}{\pi(x) P(x,y)} 
\sum_{\gamma_{uv} \ni e} \pi(u) \pi(v) | \gamma_{uv} | \nonumber \\
 &=&
\max_{e=\{x,y\} \in E(G)} \frac{1}{2|E(G)|} 
\sum_{\gamma_{uv} \ni e} \deg(u) \deg(v) | \gamma_{uv} | \,,
\end{eqnarray}
where $| \gamma_{uv} |$ is the length of the canonical path from $u$ to $v$. As per 
\cite{jerrumsinclair} Proposition 12.1, the mixing time from node $i$ satisfies
\begin{equation}
\label{eq:mix}
\tau_i(\delta) \leq \rho \left( \log \pi(i)^{-1} + \log \delta^{-1} \right),
\end{equation}
where $\delta >0$. We will set $\delta = 1/n$, so that
$\tau_i(1/n) \leq \rho \left( \log \pi(i)^{-1} + \log n \right).$

Let $G$ be a GTG at the connectivity threshold $\theta_n = cn /\log n$ for $c  < \sup_{\alpha \in (0,1)} \alpha F^{-1}(1-\alpha)/4$, as per Theorem \ref{thm:connect}. The proof of this result in \cite{bradonjic-2008-im}  establishes the following two facts. First, a constant fraction of the nodes $\alpha n$ have weights greater than $F^{-1}(1 - \alpha)$.
We let $H(G) = \{ i \in V(G) \mid w_i > F^{-1}(1-\alpha) \}$ denote this set of \emph{high weight nodes} and let $L(G) =  \{ i \in V(G) \mid w_i \leq F^{-1}(1-\alpha) \}$ denote the complementary set of
\emph{low weight nodes.} Second, each high weight node is connected to every node within the \emph{critical radius}
\begin{equation}
\label{eq:criticalradius}
\rc = \left( \frac{\log (\alpha n)}{ \alpha n \Upsilon_d } \right)^{1/d}.
\end{equation}
In other words, the induced subgraph on $H(G)$ contains a subgraph $G'$ that is a connected RGG. Every vertex in $L(G)$ is adjacent to nodes in $H(G)$, so $G$ is also connected.

We use the connected RGG subgraph $G'$  to construct our canonical paths.
Our construction for the canonical paths is similar to the one used in  \cite{cooperfrieze:cover} to bound the mixing time of RGG. Compared to that result, our proof addresses a novel technical challenge: all of the degrees of a RGG are  
$\Theta(\log n)$, while the degrees of a GTG are heterogeneous. This leads to two challenges. First, when $u,v \in L(G)$, the intermediate nodes on the canonical path $\gamma_{uv}$ must all be in $H(G)$. Therefore, we must choose these paths so that the intermediate nodes are evenly distributed among the high weight nodes. Second, many high weight nodes have degrees that are $\omega(\log n)$. We must ensure that their contribution to the sum in equation \eqref{eq:rho} does not lead to an increase in the mixing time, compared to a RGG.

We now describe the geometric scaffold for our canonical paths, as in  \cite{cooperfrieze:cover}. Partition the unit cube into a toric grid of $[k]^d$ small cubes, where $k$ is specified below. A set of canonical paths for the grid will act as the framework for our canonical paths for the GTG. 
Intuitively, we increment the entries $(\bmod \, k)$ in succession. So first we increase $a_1$ until we achieve $b_1$, then do the same for the second entry, and so on. 
The canonical path from $(a_1, a_2, \ldots , a_d)$ to 
$(b_1, b_2, \ldots , b_d)$ is
\begin{equation}
\label{eq:canonicalpaths}
\begin{array}{l}
\!\! (a_1, a_2, \ldots , a_d), \; (a_1 +1, a_2, \ldots , a_d), \, \ldots , \; (b_1 -1, a_2, \ldots , a_d), 
\; (b_1, a_2, \ldots , a_d),\\
\hspace{.2in} (b_1, a_2 + 1, \ldots , a_d), \; \ldots , \; (b_1, b_2, \ldots , b_d -1), 
\; (b_1, b_2, \ldots , b_d). \\
\end{array}
\end{equation}
Each path has length at most $d k$. 
Note that we always increment the index by $+1$ (even if there is a shorter path).

While there are $k^{2d}$ canonical paths, each edge appears in no more than $k^{d+1}$ paths. Indeed, any path that includes the edge from $(i_1, \ldots i_t, \ldots i_d)$ to
$(i_1, \ldots i_t+1, \ldots i_d)$ must start at $(\l_1, \ldots \l_t, j_{t+1}, \ldots , j_d)$
and end at $(j_1, \ldots j_t-1, \l'_{t}, \ldots , \l'_d)$ for some $\l_1, \ldots \l_t$ and $\l'_t, \ldots , \l'_d.$
This results in $t$ choices for $\l_1, \ldots , \l_t$ and $d-t+1$ choices for $\l'_t , \ldots l'_d$.

Before constructing the canonical paths for our GTG, we must prove two lemmas.
Tile $[0,1]^d$ into cubes $S_i$ with  side length $1/k=(\const \log n/n)^{1/d} = \Theta(\rc)$, where we state the conditions on constant $\const$ later. We have a $[k]^d$ grid,  whose cubes each have volume  
$\const \log n / n$. Let $H(S_i)$ and $L(S_i)$ denote the high weight and low weight nodes in  $S_i$, respectively.

\begin{lemma}
\label{lemma:square}
There exist constants $\beta_0, \beta_1 > 0$ such that whp every cube $S$ satisfies
$\beta_0 \log n \leq | L(S) |  \leq \beta_1 \log n$ and 
$\beta_0 \log n \leq | H(S) |  \leq \beta_1 \log n.$
\end{lemma}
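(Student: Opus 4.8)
The plan is to show that the number of high weight nodes (respectively low weight nodes) in each fixed cube $S$ is a binomial random variable with mean $\Theta(\log n)$, and then apply a Chernoff bound together with a union bound over the $k^d = \Theta(n/\log n)$ cubes.

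\textbf{Setting up the counts.} Fix a cube $S$ with $\vol(S) = \const \log n/n$. Each of the $n$ nodes falls in $S$ independently with probability $\const\log n/n$, and independently of this has weight exceeding $F^{-1}(1-\alpha)$ with probability exactly $\alpha$ (by definition of the quantile function and continuity of $F$). Hence $|H(S)| \sim \Bin(n, \alpha \const \log n/n)$ with mean $\alpha \const \log n$, and $|L(S)| \sim \Bin(n, (1-\alpha)\const \log n/n)$ with mean $(1-\alpha)\const \log n$. Both means are $\Theta(\log n)$ since $\alpha \in (0,1)$ and $\const$ is a fixed positive constant. (Strictly, one should note the location and weight of a node are independent, so the joint event has the claimed product probability.)

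\textbf{Concentration and union bound.} Apply the standard two-sided Chernoff bound to each binomial: for a $\Bin(n,p)$ variable $X$ with $\mu = np = \Theta(\log n)$, we have $\Pr[X \le \mu/2] \le e^{-\mu/8}$ and $\Pr[X \ge 2\mu] \le e^{-\mu/3}$, so with an appropriate choice of $\const$ (large enough that $\alpha \const/8 > 1$ and $(1-\alpha)\const/8 > 1$), each of these probabilities is $o(1/n) = o(k^{-d})$. Thus for a single cube $S$, the event $\{|H(S)| \notin [\tfrac12 \alpha\const\log n, 2\alpha\const\log n]\} \cup \{|L(S)| \notin [\tfrac12(1-\alpha)\const\log n, 2(1-\alpha)\const\log n]\}$ has probability $o(1/n)$. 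Taking a union bound over all $k^d = \Theta(n/\log n) = O(n)$ cubes, whp every cube simultaneously satisfies the desired bounds. Setting $\beta_0 = \tfrac12 \min\{\alpha,1-\alpha\}\const$ and $\beta_1 = 2\max\{\alpha,1-\alpha\}\const$ completes the argument.

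\textbf{Main obstacle.} This lemma is almost entirely routine; the only real subtlety is the consistency of the constant $\const$ across the paper. The value of $\const$ is asserted to be "stated later," and the Chernoff bound requires $\const$ large enough to beat the $n$ cubes in the union bound; at the same time, the canonical-path construction and the connectivity radius $\rc$ impose other constraints (e.g.\ $1/k = \Theta(\rc)$, which forces $\const = \Theta(1/(\alpha \Upsilon_d))$ up to constants). So the genuine work is checking that a single choice of $\const$ can satisfy all these simultaneous demands, and that the lower bound $\beta_0 \log n$ one gets here is still large enough for the later degree and path-counting estimates. The probabilistic content itself is a one-line Chernoff-plus-union-bound computation.
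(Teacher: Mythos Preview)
Your proof is correct and essentially identical to the paper's: identify $|H(S)|$ and $|L(S)|$ as binomials with mean $\Theta(\log n)$, apply Chernoff, and union-bound over the $\Theta(n/\log n)$ cubes, choosing $\const$ large enough to make the exponent beat the union bound. The only cosmetic difference is that the paper leaves the deviation parameter $\delta_2 \in (0,1)$ free (yielding the constraint $\const \geq (1+\delta_2)/(\min\{\alpha,1-\alpha\}\,\delta_2^2)$) so that it can later be optimized in Lemma~\ref{lemma:cbound}, whereas you fix the deviation at a factor of~$2$; your ``main obstacle'' paragraph is precisely the compatibility issue the paper addresses there.
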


\begin{proof}
This proof is similar to the proof of Theorem 3 in~\cite{bradonjic-2008-im}, and will use the same notation. 
Let $B_i = |H(S_i)|$ and $R_i = |L(S_i)|$.
In expectation, there are $\E[B_i] = \al \const \log n$ high weight
nodes within $S_i$. 
Using the lower and upper tail Chernoff bounds~\cite{chernoff81}, it follows
\begin{eqnarray*}
\pr\left[B_i \leq (1-\dl_1) \E[B_i] \right] &\leq &\exp \left(-\frac{\dl_1^2}{2} \E[B_i] \right), \\
\pr\left[B_i \geq (1+\dl_2) \E[B_i] \right] &\leq &\exp\left(-\frac{\dl_2^2}{2(1+\dl_2)} \E[B_i] \right).
\end{eqnarray*}
Fixing $\dl_2 \in [0,1]$, we take $\dl_1 = \dl_2/\sqrt{1+\dl_2}$, and thus $\delta_1 \in [0,1]$. 
The number of high weighted nodes $B_i$ within the cube $S_i$ satisfies
\begin{eqnarray*}
\pr \left[ B_i \in \left(1-\dl_1, 1+\dl_2 \right)  \E[B_i]  \right] &\geq& 1 -  2 \exp \left(-\frac{\dl_2^2}{2(1+\dl_2)} \E[B_i] \right)\\
&=& 1 -  2 n ^{-\al \const \dl_2^2 / (1+\dl_2)}.
\end{eqnarray*}
By the union bound it follows:
\begin{eqnarray*}
\pr \left[\bigcap_i \Big\{ B_i \in \left(1-\dl_1, 1+\dl_2 \right)  \E[B_i]  \Big\} \right] &\geq & \Big(1 -  2 n ^{-\al \const \dl_2^2 / (1+\dl_2) }\Big)^{n/(\const \log n)} .
\end{eqnarray*}
Taking the limit of the last expression as $n \to +\infty$, for 
$\const \geq (1+\dl_2)/ \al \dl_2^2,$
we obtain the concentration on $B_i$, for each cube $S_i$:
\begin{equation*}
\lim_{n \to \infty} \pr \left[ \bigcap_i \Big\{ B_i \in \left(1-\dl_1, 1+\dl_2\right) \al \const \log n \Big\} \right]  = 1.
\end{equation*} 
The concentration on the number of low weight nodes $R_i$, within each cube $S_i$, follows analogously to the previous analysis. 
In expectation, there are $\E[R_i] = (1-\al) \const \log n$ low weight nodes within $S_i$. Hence by the Chernoff tail bounds and the union bound we have 
\begin{equation*}
\lim_{n \to \infty} \pr \left[\bigcap_i \Big\{ R_i \in \left(1-\dl_1, 1+\dl_2 \right) (1-\al) \const \log n \Big\}\right]  = 1.
\end{equation*} 
Finally, we can guarantee the concentration of both the high and low weight nodes by taking $\const \geq \max \Big\{ \frac{1+\dl_2}{\al \dl_2^2} , \frac{1+\dl_2}{(1-\al) \dl_2^2} \Big\}$, that is, 
\begin{equation}
\label{eq:c0new} 
\const \geq \left\{ 
\begin{array}{ll} 
\frac{1+\dl_2}{\al \dl_2^2}, & \text{if } \al \in(0,1/2)\\ 
\frac{1+\dl_2}{(1-\al) \dl_2^2}, & \text{if } \al \in [1/2,1).
\end{array} \right.
\end{equation}
So the lemma holds with $\beta_0 = \left( 1 - \dl_2/\sqrt{1 + \dl_2} \right) \min \{ \alpha, 1- \alpha \}$ and $\beta_1 = \left( 1 + \dl_2 \right) \max \{ \alpha, 1- \alpha \}.$
\end{proof}

Two cubes are \emph{adjacent} if they share a $(d-1)$-dimensional boundary.
\begin{lemma}
\label{lemma:cbound}
Let $S_i, S_j$ be adjacent cubes. The number of edges between $H(S_i)$ and $H(S_j)$ is $\Omega( \log^2 n)$ provided that 
$c \leq (d+3)^{-d/2} \min \{ \sup_{x \in (1/2,1)} (1-x) F^{-1} (x),$ $\sup_{x \in (0,1/2]} x F^{-1} (x)  \}$.
\end{lemma}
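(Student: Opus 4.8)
The plan is to establish the stronger claim that, under the stated bound on $c$, \emph{every} node of $H(S_i)$ is adjacent to \emph{every} node of $H(S_j)$. Since Lemma~\ref{lemma:square} gives $|H(S_i)|,\,|H(S_j)| = \Theta(\log n)$ whp, this at once produces $|H(S_i)|\cdot|H(S_j)| = \Theta(\log^2 n) = \Omega(\log^2 n)$ edges between the two cubes.

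To prove the claim I would combine two estimates. First, a geometric one: because $S_i$ and $S_j$ are adjacent they share a $(d-1)$-dimensional face, so (orienting that face perpendicular to the last coordinate) their union lies in a box with one side of length $2/k$ and $d-1$ sides of length $1/k$; hence any $u\in S_i$, $v\in S_j$ satisfy $r_{uv}\le \frac{1}{k}\sqrt{(d-1)+4}=\frac{\sqrt{d+3}}{k}$, so $r_{uv}^d\le (d+3)^{d/2}/k^d=(d+3)^{d/2}\,\const\log n/n$, using that each cube has volume $\const\log n/n$. (On the torus this bound is unaffected: once $k\ge 2$ the whole box fits inside a fundamental domain, so no wrap-around path is shorter.) Second, a weight estimate: $u$ and $v$ are high weight nodes, so $w_u>F^{-1}(1-\alpha)$ and $w_v>F^{-1}(1-\alpha)$, whence $w_u+w_v>2F^{-1}(1-\alpha)$. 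Feeding both estimates into the connectivity relation~\eqref{eq:connectivity_r2}, the edge $\{u,v\}$ is present whenever $\frac{w_u+w_v}{r_{uv}^d}\ge\frac{2F^{-1}(1-\alpha)\,n}{(d+3)^{d/2}\,\const\log n}\ge\theta_n=\frac{cn}{\log n}$, that is, whenever $c\le \frac{2F^{-1}(1-\alpha)}{(d+3)^{d/2}\,\const}$.

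It remains to insert an admissible value of $\const$: by~\eqref{eq:c0new}, Lemma~\ref{lemma:square} still holds with $\const=2/\min\{\alpha,1-\alpha\}$ (take $\delta_2=1$ there), so the sufficient condition collapses to $c\le \min\{\alpha,1-\alpha\}\,F^{-1}(1-\alpha)/(d+3)^{d/2}$. As $\alpha\in(0,1)$ is at our disposal, we choose it to make the right side as large as possible, and the substitution $x=1-\alpha$ shows that the condition on $c$ assumed in the statement is sufficient for every pair in $H(S_i)\times H(S_j)$ to be an edge, which finishes the proof. The argument is almost entirely bookkeeping; the only delicate point is keeping the constant $\const$ used here consistent with the value forced by Lemma~\ref{lemma:square}, since $\const$ simultaneously controls the grid fineness and, through the union bound over $\Theta(n/\log n)$ cubes, the concentration of $|H(S)|$ and $|L(S)|$. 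The bound $w_u+w_v>2F^{-1}(1-\alpha)$ is deliberately crude, and it is this crudeness together with the $\sqrt{d+3}$ diameter of a pair of adjacent cubes that produces the $(d+3)^{d/2}$ factor constraining $c$; a sharper estimate would widen the admissible range but is not needed elsewhere.
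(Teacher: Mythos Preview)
Your proposal is correct and follows essentially the same argument as the paper: both prove the stronger claim that every pair in $H(S_i)\times H(S_j)$ is adjacent by combining the diameter bound $r_{uv}\le \sqrt{d+3}/k$ with the weight bound $w_u+w_v\ge 2F^{-1}(1-\alpha)$, then substitute the admissible value of $\const$ from~\eqref{eq:c0new} and optimize over $\alpha$. Your choice $\delta_2=1$ achieves exactly the paper's $\sup_{\delta_2\in(0,1)}\delta_2^2/(1+\delta_2)=1/2$, so the resulting bound on $c$ is identical.
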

See~\ref{sec:examples} for explicit calculation of the constant $c$ for two example weight distributions.

\begin{proof}
Consider any high weight node $u \in H(S_i)$ and any high weight node $v \in H(S_j)$, with the weights $w_{u}$ and $w_{v}$, respectively. The distance $r_{u v}$ between $u$ and $v$ is at most $\sqrt{d+3}(\const \log n / n)^{1/d}$. 
Indeed, the furthest points in two adjacent $d$-dimensional unit cubes are at distance $\sqrt{d+3}$ by the $d$-dimensional Pythagorean theorem.

Consider the connectivity relation
\begin{equation*}
\frac{w_{u}+w_{v}}{r^d_{u v}} \geq \frac{F^{-1}(1-\al)+F^{-1}(1-\al)}{(d+3)^{d/2}\const \log n / n} = \frac{2F^{-1}(1-\al)}{(d+3)^{d/2}\const} \frac{ n}{\log n}.
\end{equation*}
High weight nodes $u, v$ are connected with probability one if  $(w_{u}+w_{v})/r^d_{u v} \geq \theta_n = c n / \log n$, which is guaranteed if $2 F^{-1}(1-\al)/((d+3)^{d/2}\const) \geq  c$.
Using equation (\ref{eq:c0new}), we require 
\begin{equation*}
c \leq \frac{2}{(d+3)^{d/2}} \frac{\dl_2^2} {1+\dl_2}  
\left\{ 
\begin{array}{ll} 
\al F^{-1}(1-\al), & \text{if } \al \in(0,1/2)\\ 
(1-\al) F^{-1}(1-\al), & \text{if } \al \in [1/2,1).
\end{array} \right.
\end{equation*} 
Since $\dl_2 \in (0,1)$ is arbitrary, with $\sup_{\dl_2 \in (0,1)} \dl_2^2/(1+\dl_2) = 1/2$, the conditions combine to give
\begin{eqnarray*}
c &\leq& (d+3)^{-d/2} \min \Big\{ \sup_{\al \in (0,1/2)} \al F^{-1} (1-\al), \sup_{\al \in [1/2,1)} (1-\al) F^{-1} (1-\al)  \Big\} \\ 	
 &=& (d+3)^{-d/2} \min \Big\{ \sup_{x \in (1/2,1)} (1-x) F^{-1} (x), \sup_{x \in (0,1/2]} x F^{-1} (x)  \Big\}. 	
\end{eqnarray*}
\end{proof}

We now employ a randomized procedure for choosing canonical paths. This procedure guarantees that no edge appears in more than $\Theta(k^{d+1}) = \Theta(\rc^{-d-1} )= \Theta( (n / \log n)^{(d+1)/d})$ canonical paths. Let us denote cubes as $S(a_1, \ldots, a_d)$ for $1 \leq a_i \leq k$, where 
$(a_1, \ldots, a_d)$ is the location in the $d$-dimensional grid. By Lemma \ref{lemma:square}, we have both $\beta_0 \log n \leq |L(S(a_1, \ldots, a_d))| \leq \beta_1 \log n$ and $\beta_0 \log n \leq |H(S(a_1, \ldots, a_d))| \leq \beta_1 \log n$ for some constants $\beta_0,\beta_1 > 0$.
For each cube $S = S(a_1, \ldots, a_d)$,  evenly partition  $L(S)$ into sets $L_i(S)$, for $1 \leq i \leq \beta_0 \log n$. Each set has size at most $\lceil \beta_1/ \beta_0 \rceil$ whp. Assign each set of low weight nodes to a distinct high weight node  $ h_i \in H(S)$, for $1 \leq  i \leq \beta_0 \log n$. The high weight node $h_i$ is the \emph{high weight representative} for the nodes in 
$L_i(S)$. The key outcome of this assignment is  that each $h \in H(S)$ represents a constant number of  low weight nodes whp.

Consider any ordered pair of nodes  $(x,y)$. Let $x \in S(a_1, \ldots, a_d)$ and $y \in S(b_1, \ldots, b_d)$. We choose a canonical path from $x$ to $y$ as follows.
We use the toric grid to identify the sequence of cubes in the canonical path.
Taking equation \eqref{eq:canonicalpaths} as our framework, we consider cubes
$S(a_1, a_2, \ldots, a_d)$, $S(a_1+1, a_2,  \ldots , a_d)$, $\ldots$, $S(b_1, a_2, \ldots, a_d)$, $S(b_1, a_2 + 1,  \ldots, a_d)$, $\ldots$ ,  $S(b_1, b_2, \ldots, b_d)$. For brevity, call these cubes $S_0, S_1, \ldots, S_t$.
If $x$ is a low weight node, then set $x_0$ to be the high weight representative for $x$. If $x$ is a high weight node, set $x_0=x$. 
For $1 \leq i \leq t$, choose $x_{i}$ to be a random high weight node in $S_i$.
Set $x_t=y$ if $y$  is a high weight node, otherwise using the high weight representative for $y$.
Our canonical $(x,y)$ path is $x, x_0, x_1, \ldots, x_t, y$. If $x_0=x$ or $x_t=y$, we remove the repeated node from the path. We have
\begin{equation}
\label{eq:pathlength}
\max_{u,v \in V(G)} |\gamma_{uv} | \leq d \, k + 2 = O \left( \left(n/\log n\right)^{1/d} \right)
\end{equation}
and furthermore, we can bound how often each edge appears in a canonical path.
\begin{lemma}
Every edge in $G$ appears in at most $O( (n/ \log n)^{(d+1)/d})$ canonical paths whp.
\end{lemma}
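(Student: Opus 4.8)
The plan is to decompose the count of canonical paths through a fixed edge $e$ into two contributions coming from the two structural layers of the construction: the grid-level scaffold of equation~\eqref{eq:canonicalpaths}, and the local randomized choices inside each cube. First I would classify the edges of $G$. An edge $e=\{x_{i-1},x_i\}$ used inside a canonical path either (a) joins two high weight nodes lying in the same cube $S$ or in two grid-adjacent cubes $S_i, S_{i+1}$ — these are the ``typical'' edges that make up the main body of the path — or (b) is one of the two endpoint-connecting edges $\{x,x_0\}$ or $\{x_t,y\}$ attaching a low weight node to its high weight representative. I would bound the two types separately; the type-(b) count is easily seen to be lower order because each low weight node contributes at most $n$ paths of that kind (one per other endpoint, namely $O(n)$) and the total over a single fixed edge $e$ is at most $O(n) = o((n/\log n)^{(d+1)/d})$ when $d\geq 2$... wait, that needs a touch more care: a type-(b) edge $\{x,x_0\}$ is traversed only by $(x,\cdot)$ and $(\cdot,x)$ paths, so at most $2n$ of them, which is indeed $O(n) = O((n/\log n)^{(d+1)/d})$ for $d\leq$ any value we care about. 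So type (b) is never the bottleneck.

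The heart of the argument is the type-(a) count. Fix such an edge $e$ between cubes $S$ and $S'$ (possibly $S=S'$). A canonical path uses $e$ only if its grid scaffold passes through the ordered pair $(S,S')$ at the corresponding step. By the combinatorial fact established just before equation~\eqref{eq:canonicalpaths} — any grid edge lies in at most $k^{d+1}$ of the $k^{2d}$ grid paths — there are at most $k^{d+1}$ choices of scaffold $(S_0,\ldots,S_t)$ that route through $(S,S')$ at the right position. For each such scaffold, the set of vertex pairs $(x,y)$ whose canonical path follows it is constrained: $x$ lies in $S_0$ (or is represented by a node in $S_0$) and $y$ lies in $S_t$ (or is represented there). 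By Lemma~\ref{lemma:square}, $|S_0|, |S_t| = \Theta(\log n)$, and by the high-weight-representative assignment each high weight node represents only $O(1)$ low weight nodes whp, so there are $O(\log^2 n)$ such pairs $(x,y)$ per scaffold. But not all of these actually use the specific edge $e$: they use $e$ only if the random high weight nodes $x_{i-1}\in S$ and $x_i \in S'$ happen to be the two endpoints of $e$. Here I must be careful — the bound $k^{d+1}\cdot \log^2 n$ would already give $O((n/\log n)^{(d+1)/d}\cdot \log n)$, off by a $\log n$ factor from the claim. So the randomness has to buy back a factor of $\log n$.

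The key point, and the step I expect to be the main obstacle, is to show the randomized choice of intermediate high weight nodes spreads the load so that each individual edge is hit $O(1)$ times per scaffold in expectation, with enough concentration to union-bound over all $O(\rc^{-d})$ edges and all $k^{d+1}$ scaffolds. Concretely: a path with a given scaffold passing through step $(S,S')$ picks $x_{i-1}$ uniformly from $H(S)$ and $x_i$ uniformly from $H(S')$, independently; since $|H(S)|,|H(S')| = \Theta(\log n)$, the probability this pair equals the specific ordered endpoint pair of $e$ is $O(1/\log^2 n)$. Summing the indicator over the $O(\log^2 n)$ vertex pairs $(x,y)$ compatible with this scaffold gives expected contribution $O(1)$ per scaffold, hence expected total over all scaffolds through $e$ of $O(k^{d+1}) = O((n/\log n)^{(d+1)/d})$. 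To upgrade expectation to a whp statement simultaneously over all edges, I would apply a concentration inequality (Chernoff/Bernstein for sums of independent indicators — the per-path choices are independent across distinct ordered pairs $(x,y)$) to the number of paths through each $e$, obtaining a failure probability of the form $\exp(-\Omega(k^{d+1}))$ or, more conservatively, $n^{-\omega(1)}$ after noting $k^{d+1} = \omega(\log n)$, and then union-bound over the $O(n/\log n)$ edges. The delicate part is handling the dependence structure correctly — whether to expose the representative assignment first and condition on the whp event of Lemma~\ref{lemma:square} and the $O(1)$-representative bound, then treat the intermediate-node choices as the only remaining randomness — and making sure the concentration is strong enough to survive the double union bound over edges and scaffolds. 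Once that is in place, combining the type-(a) bound $O(k^{d+1}) = O((n/\log n)^{(d+1)/d})$ with the lower-order type-(b) bound $O(n)$ completes the proof.
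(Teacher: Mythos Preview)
Your proposal is correct and follows essentially the same approach as the paper: classify edges by whether a low-weight endpoint is involved (giving the trivial $O(n)$ bound) versus high-to-high edges between adjacent cubes, then for the latter combine the $k^{d+1}$ grid-path count, the $\Theta(\log^2 n)$ endpoint pairs per scaffold, and the $\Theta(1/\log^2 n)$ probability that the random intermediate choices hit the specific edge, followed by Chernoff plus a union bound. The paper's write-up differs only in that it further splits the adjacent-cube case into four sub-cases according to whether $u\in S$ or $v\in T$ (since for paths with $u\in S$ the node $x_0$ is the deterministic representative of $u$ rather than a uniform random choice, so only one of the two $1/\log n$ factors applies); you flag this dependence issue but do not resolve it, and you also leave the high--high same-cube case (which the paper dispatches as $O(1)$) implicit---both are easy to fill in along the lines you sketch.
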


\begin{proof}
Let $Z_{xy}$ denote the number of times the edge $xy$ is chosen. 
If both $x$ and $y$ are low weight nodes then $Z_{xy}=0$  (we always move to a high weight node from a low weight node). When $x$ is low weight and 
$y$ is high weight, then the edge $xy$ can only be used when $x$ and $y$ are in the same cube. In this case,  the edge $xy$ can only be chosen when the canonical path has $x$ as one of its endpoints. Therefore $Z_{xy} \leq 2(n-1)$.

Now suppose that both $x$ and $y$ are high weight nodes. The edge $xy$ is only used if $x,y$ are in the same cube or in adjacent cubes. First, we consider high weight $x,y$  in the same cube $S$. 
The edge $xy$ will be used only by paths between $L_i(S) \cup \{ x \}$ and $L_j(S) \cup \{ y \}$. If one or both is not a high weight representative, then this edge will be used even fewer times. Therefore $Z_{xy} \leq (1 + \lceil \beta_1/\beta_0 \rceil)^2 = O(1)$ whp.

Consider $x \in S$ and $y \in T$ in adjacent cubes $S, T$. Let
${\cal P}_{xy}= \{ \gamma_{uv} \mid xy \in \gamma_{uv} \}.$
We consider four cases, according to the locations of $u$ and $v$. If $u \in S$ and $v \in T$ then  just as in the previous case, whp there are at most 
$(1 + \lceil \beta_1/\beta_0 \rceil)^2 = O(1)$ such paths.

The total number of paths $\gamma_{uv}$ with $u \in S$ and $v \notin T$ is $O(n)$. Indeed, $x$ must be the high weight representative of $u$, which gives $1 + \lceil \beta_1/\beta_0 \rceil = O(1)$ choices for $u$, and there are $O(n)$ end nodes for paths that start in $S_1$. Similarly, the total number of paths with $u \notin S$ and $v \in T$ is $O(n)$.

The remaining case is when $u \notin S$ and $v \notin S$.  
Let $Z'_{xy}$ denote the number of times the edge $xy$ is chosen as the random edge from $S$ to $T$ for some canonical path. Let 
\[
{\cal P}(S, T) = \{ \gamma_{uv} \mid \gamma_{uv}\textrm{ traverses from $S$ to $T$ and
$ u \notin S$ and} v \notin T \}.
\]
Then whp
\[
\E[Z'_{xy}] =  \!\!\!\! \sum_{\gamma_{uv} \in  {\cal P}(S, T)} \!\!\!\!\!\!\!\! \pr(xy \in \gamma_{uv}) 
\leq \rc^{-d-1} (\beta_1 \log n )^2   \frac{1}{( \beta_0 \log n)^2} 
=\left( \frac{\beta_1}{\beta_0} \right)^2 \rc^{-d-1}.
\]
Indeed, the number of canonical paths for the toric  grid which pass from $S$ to $T$ is $O(\rc^{-d-1})$. Each edge between cubes  corresponds to at most $(\beta_1 \log n)^2$ toric paths in the GTG. Since both $x$ and $y$ are internal nodes of these paths, they were each chosen uniformly and independently with probability at most $1/ (\beta_0 \log n)$.

Using the Chernoff bound for this binomial distribution, 
\begin{eqnarray*}
\lefteqn{
\pr\left[ | Z'_{xy} - \E[Z'_{xy}] | \geq \eta \E[Z'_{xy}] \right] 
\, \leq \, 2 \exp \Big( -\frac{\eta^2}{ 3} \E[Z'_{xy}] \Big) }  \\
& \leq  & 
2 \exp \Big( -\frac{\eta^2}{ 3} \big(\frac{\beta_1}{\beta_0} \big)^2 \rc^{-d-1} \Big) \\
& = & 2 \exp \Big( -\frac{\eta^2}{ 3} \big(\frac{\beta_1}{\beta_0} \big)^2 
\big( \frac{n}{ \alpha \Upsilon_d \log n} \big)^{(d+1)/d} \Big) 
\, = \, o(1/e^{n}).
\end{eqnarray*}
The union bound now gives
\begin{eqnarray*}
&& \pr \left[ \wedge_{x,y}\left(  |Z'_{xy}- \E[Z'_{xy}] | \geq \eta \E[Z'_{xy}] \right) \right] \\
&\leq& \sum_{x,y} \pr  \left[  |Z'_{xy}- \E[Z'_{xy}] | \geq \eta \E[Z'_{xy}]  \right] \,\leq \,  n^2 o(1/e^{n}) \rightarrow 0.
\end{eqnarray*}
Therefore whp, every edge between high weight nodes in adjacent cubes is used by $(1 \pm \eta) \rc^{-d-1} = \Theta((n/ \log n)^{(d+1)/d})$ canonical paths. In this case, 
$\E[Z_{xy}] = \Theta((n/\log n)^{(d+1)/d})$.
\end{proof}

\section{The mixing time for GTG}
\label{sec:mixing_time_for_GTG}

In this section, we  prove that the mixing time for a $d$-dimensional GTG near the threshold for connectivity is $\mixbound$ when $\pr[W \geq x] = O(1/x^{d+\nu})$ for $\nu > 0$.
 
We have  $|E(G)| = \Theta( n \log n)$  and
$\max_{u,v \in V(G)} |\gamma_{uv}| = O( (n/\log n)^{1/d})$ by Lemma \ref{thm:edges} and  equation~\eqref{eq:pathlength}, respectively.  
Substituting these values into equation~(\ref{eq:rho}) yields
\begin{equation}
\label{eqn:canon-rho}
\rho \leq  
O \left(  \frac{1}{n^{(d-1)/d}(\log n)^{(d+1)/d}} \right) 
\left( \max_{e=\{x,y\} \in E(G)} \sum_{\gamma_{uv} \ni e} \deg(u) \deg(v) \right).
\end{equation}
Fix an edge $e=\{x,y\}$ between high weight nodes in adjacent cubes, and define
$\sigma(e) = \sum_{\gamma_{uv} \ni e} \deg(u) \deg(v).$
Suppose that these high weight cubes differ in the $k$th coordinate. Specifically, the cubes containing $x$ and $y$ are indexed by
$(j_1, \ldots, j_{k-1}, j_k, j_{k+1}, \ldots, j_d)$ and
$(j_1, \ldots, j_{k-1}, j_k+1, j_{k+1}, \ldots, j_d)$, respectively.
If $e \in \gamma_{uv}$ then $u$ must be in an initial cube indexed by
$(i_1, \ldots, i_{k-1}, i_k, j_{k+1}, \ldots, j_d)$, while $v$ must be in a target cube indexed by
$(j_1, \ldots, j_{k-1}, \ell_k, \ell_{k+1}, \ldots, \ell_d)$. 
Let $U_1 \subset [0,1]^d$ correspond to the set of initial cubes, and let $U_2 \subset [0,1]^d$ correspond to the set of target cubes.
The volumes of these subsets are $\vol(U_1) = (n / \log n )^{k/d} / ( n / \log n ) =
(\log n / n)^{(d-k)/d}$ and $\vol(U_2) = (n / \log n )^{(d-k+1)/d} / ( n / \log n ) =
(\log n / n)^{(k-1)/d}$. Let $V_1$ (respectively $V_2$) be the set of nodes in $U_1$ (respectively $U_2$).

In order to bound $\sigma(e)$ we use the partition of the vertices into the sets $B$ and  $A_1, A_2, \ldots A_M$ as defined in equations \eqref{eq:defB} and \eqref{eq:defAk}. We consider canonical paths containing $e$ that run between every possible pair of sets in this partition. The following three technical lemmas require the weight decay of $\pr[W \geq x] = O(1/x^{d+\nu})$.

\begin{lemma}
\label{lemma:T-conc}
For any $T \in \{ B, A_1, A_2, \ldots A_M \}$, let $Z_i  = | T \cap V_i |$ for $i=1,2$. Then 
$\E[Z_i] = |T|  \vol(U_i)$ and
there exists a constant $C >0$ (depending on $T$ and $i$) such that
\[
\Pr \left[ |Z_i - \E[Z_i] | \geq   \E[Z_i]  \right]  = O\left( \exp( -C n^{\nu/2d(d+\nu)}) \right).
\]
\end{lemma}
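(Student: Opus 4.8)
The plan is to prove the expectation formula directly and then apply a concentration inequality, where the main subtlety is choosing the right concentration tool and verifying that the resulting bound is strong enough to survive the union bound over all $O(M) = o(\log n)$ sets $T$ and both indices $i$.

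\textbf{Expectation.} Each of the $n$ nodes is placed uniformly and independently in $[0,1]^d$, and the membership in a given weight-class $T$ depends only on the weight $w_v$, which is independent of the location. Hence, conditioning on the event $\{v \in T\}$, the probability that $v \in V_i$ is exactly $\vol(U_i)$ (the toric grid of cubes tiles $[0,1]^d$, so $U_i$ is a union of cubes with total volume as computed before the lemma). Writing $Z_i = \sum_{v \in T} \ind[v \in U_i]$ and using linearity of expectation gives $\E[Z_i \mid |T|] = |T| \vol(U_i)$. Since Lemma~\ref{lemma:size-Ak} (together with the trivial bound $|B| \sim \alpha n$) controls $|T|$ whp, this is the stated formula; strictly, one should either condition on $|T|$ throughout, or note that $Z_i$ is itself $\Bin(n, \Pr[v \in T] \vol(U_i))$ so that $\E[Z_i] = n \Pr[v \in T]\vol(U_i) = \E[|T|]\vol(U_i)$, which is the cleanest route and avoids circular conditioning.

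\textbf{Concentration.} The variable $Z_i \sim \Bin(n, q)$ with $q = \Pr[v \in T]\, \vol(U_i)$, so its mean is $\mu_i := nq = \E[Z_i]$. We want $\Pr[|Z_i - \mu_i| \ge \mu_i] = O(\exp(-C n^{\nu/2d(d+\nu)}))$, i.e. a Chernoff bound at relative deviation $\delta = 1$, which gives a bound of the form $\exp(-c\,\mu_i)$. So the whole proof reduces to showing $\mu_i = \Omega(n^{\nu/2d(d+\nu)})$. The smallest $\mu_i$ arises when $T = A_M$ (the thinnest high-weight class, of size $\Theta(n / a_{M-1}^{1+\eps})$ with $a_{M-1}$ as large as roughly $W_{\max}^{\beta^{M-1}}$) combined with the smaller of the two volumes. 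Using $|A_k| = \Theta(n/a_{k-1}^{1+\eps})$, $a_{k-1} = O((n\kappa(n))^{\beta^{k-1}/(d+\nu)})$ with $\beta = (1+\eps)/(d+\nu) < 1$, and $\vol(U_i) \ge (\log n / n)^{(d-1)/d}$ (the worst case $k=1$ or $k=d$), one computes $\mu_i = |A_k|\vol(U_i) = \Theta\!\big(n^{1/d}(\log n)^{(d-1)/d} a_{k-1}^{-(1+\eps)}\big)$, and since $a_{k-1}^{1+\eps} = O((n\kappa(n))^{(1+\eps)/(d+\nu)})$ and our standing assumption $\eps < \nu/2d$ (equation~\eqref{eq:epnu}) forces $(1+\eps)/(d+\nu) < 1/d - \nu/(2d(d+\nu))$ after a short calculation, we get $\mu_i = \Omega(n^{\nu/2d(d+\nu) - o(1)})$, which absorbs into the stated exponent after adjusting the constant. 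Plugging $\mu_i \ge n^{\nu/2d(d+\nu)}$ into the Chernoff bound $\Pr[|Z_i-\mu_i|\ge\mu_i]\le 2e^{-\mu_i/3}$ gives the claim with $C = 1/3$ (or any constant up to the slack).

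\textbf{Main obstacle.} The routine parts are the expectation identity and the Chernoff step; the real work is the exponent bookkeeping in the previous paragraph — verifying that even the leanest weight class $A_M$ intersected with the thinner slab $U_i$ still has mean growing like a fixed positive power of $n$, and that this power is at least $\nu/2d(d+\nu)$. This is where the constraint $\eps < \nu/2d$ is used in an essential way: it guarantees $(1+\eps)/(d+\nu)$ is bounded away from $1/d$ by the required margin, so that $n^{1/d}/a_{k-1}^{1+\eps}$ genuinely grows. One must also double-check that the bound $a_{k-1} = O(W_{\max}^{\beta^{k-1}})$ from Lemma~\ref{lemma:M} is applied with the correct index and that $W_{\max} = O((n\kappa(n))^{1/(d+\nu)})$, so that $a_{k-1}^{1+\eps}$ is largest (hence $\mu_i$ smallest) when $k-1 = 0$, i.e. for $A_1$ paired with the larger $a$, but that case has $\mu_i = |A_1|\vol(U_i) = \Theta(n^{1/d}(\log n)^{(d-1)/d} W_{\max}^{-(1+\eps)})$ which is still $\Omega(n^{\nu/2d(d+\nu)})$ by the same inequality. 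Once this single inequality between exponents is nailed down, everything else is bookkeeping, and the $o(\log n)$ sets can be handled by a union bound that does not affect the form of the estimate.
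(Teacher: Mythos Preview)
Your approach is correct and essentially identical to the paper's: identify $Z_i$ as binomial, lower-bound its mean by $\Omega(n^{\nu/2d(d+\nu)})$ via the exponent inequality $1/d-(1+\eps)/(d+\nu)=(\nu-d\eps)/(d(d+\nu))>\nu/(2d(d+\nu))$ (equivalent to $\eps<\nu/2d$), and apply a multiplicative Chernoff bound. Your self-correction that $A_1$ rather than $A_M$ is the genuinely thinnest class is on point---the paper labels $A_M$ as ``most delicate'' but then immediately bounds $a_{M-1}\le W_{\max}$, which is only tight for $A_1$, so the computation is the same in either presentation.
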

In other words, the probability that $Z_i \neq \Theta(\E[Z_i])$ decays at a super-polynomial rate.

\begin{proof}
We prove this concentration for $Z = A_M \cap V_1$, which is the most delicate case. The other cases follow similarly.
First, by the independence of weights and location, we have $|A_M \cap V_1| \sim \Bin (|A_M| , \vol(U_1))$.
Furthermore, by Lemma \ref{lemma:size-Ak}, $|A_M| = \Theta(n/a_{M-1}^{1+\ep})$.
Therefore $A_M \cap V_1 \sim \Bin (C' \, n/a_{M-1}^{1+\ep}, (\log n / n)^{(d-k)/d})$ for some $C'>0$.
The expected value is
\begin{eqnarray}
\nonumber
\E[Z]  &=& \frac{C' \, n}{a_{M-1}^{1+\ep}}  \left( \frac{\log n}{n} \right)^{(d-k)/d} \, \geq \,
\frac{C' \, n}{W_{\max}^{1+\ep}}  \left( \frac{\log n}{n} \right)^{(d-k)/d} \\
 \nonumber
 & \geq & \frac{C' \, n}{(n \omn)^{(1+\ep)/(d+\nu)}}   \frac{(\log n)^{1/d}}{n^{(d-1)/d}}   \\
\label{eqn:Z-lower}
&\geq& C' n^{1/d - (1+\ep)/(\delta + \nu)} \, = \, \Omega \left(n^{\nu/2d(d+\nu)} \right)
\end{eqnarray}
since $\ep < \nu/2d$ by equation \eqref{eq:epnu} and $\omn$ grows arbitrarily slowly.
Using the binomial Chernoff bound (cf \cite{janson-2000-random}, Corollary 2.3), we have
\begin{eqnarray*}
{\Pr \left[ \left| Z - \E[ Z ] \right| \geq \ \E[ Z ]  \right]
\leq 2 \exp \left( - \frac{1}{3} \E[ Z ] \right) } = O \left(\exp(- C n^{\nu/2d(d+\nu)}) \right)
\end{eqnarray*}
for some constant $C >0$. 
\end{proof}

For $T_1, T_2 \in \{B, A_1, A_2, \ldots A_M \}$, let $\lambda(T_1, T_2)$ denote the number of canonical paths from $T_1 \cap V_1$ to $T_2 \cap V_2$ that use edge $e$. 

\begin{lemma}
\label{lemma:lambda-conc}
For  $T_1,T_2 \in \{ B, A_1, A_2, \ldots A_M \}$, we have 
$$\E[\lambda(T_1, T_2) ] = \Theta \left( \frac{\E[|T_1 \cap V_1|]  \cdot  \E[|T_2 \cap V_2|] }{ \log^2 n} \right)$$
 and there exists a constant $K$ (depending on $T_1, T_2$) such that
\[
\Pr \left[ \lambda(T_1,T_2) - \E[\lambda(T_1,T_2)] | \geq   \E[\lambda(T_1,T_2)]  \right]  
= O  \left(\exp(- K n^{\nu/2d(d+\nu)})  \right).
\]
\end{lemma}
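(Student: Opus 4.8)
The plan is to compute the expectation $\E[\lambda(T_1,T_2)]$ first, conditioning on the vertex counts $|T_1\cap V_1|$ and $|T_2\cap V_2|$, and then to establish concentration via a two-stage argument: condition on the (already concentrated) counts and on the high/low-weight cube populations, then bound the fluctuations coming purely from the random choice of intermediate high-weight nodes. For the expectation, recall from the proof of the previous edge-multiplicity lemma that a canonical path from $u$ to $v$ uses the fixed edge $e=\{x,y\}$ between adjacent cubes $S\ni x$, $T\ni y$ only if $x$ and $y$ are precisely the randomly chosen high-weight representatives of the step from $S$ to $T$ on the toric-grid path joining the cube of $u$ to the cube of $v$. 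Conditioned on $u\in U_1$ and $v\in U_2$ (the initial/target regions forced by $e$), there is exactly one toric-grid path from $u$'s cube to $v$'s cube that traverses $S\to T$, and along it $x$ is chosen with probability $1/|H(S)|$ and $y$ with probability $1/|H(T)|$, independently. By Lemma~\ref{lemma:square}, $|H(S)|,|H(T)|=\Theta(\log n)$, so for each such pair $(u,v)$ the probability that $e\in\gamma_{uv}$ is $\Theta(1/\log^2 n)$. Summing over the $|T_1\cap V_1|\cdot|T_2\cap V_2|$ pairs and then taking expectations over the vertex positions (using Lemma~\ref{lemma:T-conc} to replace $|T_i\cap V_i|$ by $\E[|T_i\cap V_i|]$ up to constant factors) gives the claimed formula $\E[\lambda(T_1,T_2)]=\Theta\!\left(\E[|T_1\cap V_1|]\,\E[|T_2\cap V_2|]/\log^2 n\right)$.

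For concentration, I would first invoke Lemma~\ref{lemma:T-conc} and Lemma~\ref{lemma:square}: on the high-probability event that all the relevant $|T_i\cap V_i|$ are within a constant factor of their means and all cubes have $\Theta(\log n)$ high- and low-weight nodes, the quantity $\lambda(T_1,T_2)$ is a sum of indicator variables $\ind[e\in\gamma_{uv}]$ over pairs $(u,v)$ with $u\in T_1\cap V_1$, $v\in T_2\cap V_2$. These indicators are not mutually independent, but they are determined by the independent uniform choices of intermediate high-weight nodes across all the cubes; crucially, for two pairs $(u,v)$ and $(u',v')$ whose toric-grid paths share the step $S\to T$, the event $e\in\gamma_{uv}$ depends only on the single choice made at that step in $u$'s path and $v$'s path. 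The cleanest route is to note that $\lambda(T_1,T_2)=Z'\cdot(\text{contributions from within-region pairs})$ is dominated by the term analogous to $Z'_{xy}$ in the previous lemma: the number of toric-grid paths through $S\to T$ with initial cube in $U_1$ and target cube in $U_2$ is $\Theta(\rc^{-d-1})=\Theta((n/\log n)^{(d+1)/d})$, each contributes an independent pair of choices, and $x,y$ are each selected with probability $\Theta(1/\log n)$. Thus $\E[\lambda(T_1,T_2)]$, restricted to the dominant term, is a sum that—after conditioning—behaves like a binomial with mean $\Omega(n^{\nu/2d(d+\nu)})$, exactly as in Lemma~\ref{lemma:T-conc}. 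Applying the binomial Chernoff bound (cf.~\cite{janson-2000-random}, Corollary~2.3) then yields
\[
\Pr\!\left[|\lambda(T_1,T_2)-\E[\lambda(T_1,T_2)]|\ge \E[\lambda(T_1,T_2)]\right]
\le 2\exp\!\left(-\tfrac13\,\E[\lambda(T_1,T_2)]\right)
= O\!\left(\exp(-Kn^{\nu/2d(d+\nu)})\right).
\]

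The main obstacle I anticipate is handling the dependencies among the indicators $\ind[e\in\gamma_{uv}]$ carefully enough to apply a Chernoff-type bound: the choices made at cube $S$ (resp.~$T$) are shared across many pairs, so $\lambda$ is not literally a sum of independent Bernoullis. The right way around this, mirroring the treatment of $Z'_{xy}$ in the preceding lemma, is to re-index the sum by the $\Theta(\rc^{-d-1})$ toric-grid \emph{paths} through the edge $e$ rather than by vertex pairs, observe that distinct toric paths use disjoint sets of random choices at the cube boundaries, and verify that each such path contributes an independent bounded increment. One must also check that the lower-order contributions — pairs with $u$ or $v$ inside $S$ or $T$ themselves, which number only $O(n)$ and involve $O(1)$-bounded $Z_{xy}$ by Lemma~\ref{lemma:square} — do not disturb the $\Theta$-order of the mean or the concentration. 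Finally, a union bound over the $o(\log^2 n)$ choices of $(T_1,T_2)$ and over all edges $e$ (there are $O(n\log n)$ of them) is absorbed by the super-polynomially small failure probability, so all the estimates hold simultaneously whp.
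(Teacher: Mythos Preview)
Your overall strategy---compute the expectation pair-by-pair and then apply a binomial Chernoff bound---is exactly what the paper does, and your bound $\E[\lambda(T_1,T_2)]=\Omega(n^{\nu/2d(d+\nu)})$ matches theirs. The paper's proof is only a few lines: it asserts $\lambda(T_1,T_2)\sim\Bin(|Z_1|\,|Z_2|,K'/\log^2 n)$, invokes Lemma~\ref{lemma:T-conc} to lower-bound the mean, and applies Chernoff.

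Your main ``obstacle,'' however, is a phantom. You write that the choices made at cube $S$ are \emph{shared across many pairs}, but in the paper's randomized construction they are not: for each ordered pair $(u,v)$ the intermediate high-weight nodes $x_1,\ldots,x_t$ are drawn afresh and independently (``For $1\le i\le t$, choose $x_i$ to be a random high weight node in $S_i$''). Hence, conditioned on the vertex locations, weights, and the cube populations $|H(S)|,|H(T)|$, the indicators $\ind[e\in\gamma_{uv}]$ over pairs $(u,v)\in (T_1\cap V_1)\times(T_2\cap V_2)$ with $u\notin S$, $v\notin T$ are genuinely independent Bernoullis with parameter $1/(|H(S)|\,|H(T)|)=\Theta(1/\log^2 n)$. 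So $\lambda(T_1,T_2)$ really is (up to the $O(n)$ boundary pairs you already flagged) a binomial, and Chernoff applies directly---no re-indexing by toric paths is needed. Your proposed workaround is also slightly off as stated: distinct vertex pairs sharing the same toric-grid path do \emph{not} share random choices, so grouping by toric path would not produce the independent bounded increments you describe. Simply drop the dependency discussion and the argument is complete.
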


\begin{proof}
The proof is similar to the previous one. For $i=1,2$, let $Z_i = T_i \cap V_i$, so that  
$|Z_i| = \Theta(\E[Z_i])$  by Lemma \ref{lemma:T-conc}.
 The distribution for $\lambda(T_1, T_2)$ is $\Bin(|Z_1|  |Z_2|, K'/\log^2 n)$ for some $K' >0$,
and $|Z_i| = \Theta(\E[Z_i])$ for $i=1,2$ by Lemma \ref{lemma:T-conc}.

Furthermore,  $|Z_i| = \Omega(n^{\nu/2d(d+v)})$ by equation (\ref{eqn:Z-lower}). Therefore $\E[\lambda(T_1, T_2)] = \Omega (n^{\nu/d(d+\nu)}/\log^2 n) = \omega(n^{\nu/2d(d+\nu)})$.
The binomial Chernoff bound gives
\begin{eqnarray*}
\Pr \big[ \left| \lambda(T_1,T_2) - \E[ \lambda(T_1, T_2) ] \right| \geq \ \E[ \lambda(T_1, T_2) ]  \big]
&\leq& 2 \exp \left( - \frac{1}{3} \E[ \lambda(T_1, T_2) ] \right)  \\
&=& O \left(\exp(- K n^{\nu/2d(d+\nu)}) \right)
\end{eqnarray*}
for some constant $K >0$. 
\end{proof}

The previous two lemmas show that these quantities are tightly concentrated around their means. A union bound shows that whp these quantities are concentrated for all candidate edges simultaneously.
Indeed, there are $O (d \cdot (n /\log n)^{1/d})$ choices for adjacent cubes used in canonical paths, with $\Theta(\log^2 n)$ edges running between each pair. Recall that $M=o(\log n)$ by Lemma \ref{lemma:M}. For a fixed choice of adjacent cubes, there are $2(M+1) = o(\log n)$ choices for $T$ in Lemma \ref{lemma:T-conc}. Our union bound for these event involves $O( (n/\log n)^{1/d}   \log n)$ terms, each decaying at a super-polynomial rate. Therefore, all these events are concentrated whp.
As for Lemma \ref{lemma:lambda-conc}, there are $(M+1)^2 = o(\log^2 n)$ choices for $(T_1,T_2)$ for each pair of adjacent cubes. This union bound is taken over all relevant edges between high adjacent cubes. The number of addends in this union bound is $O((n /\log n)^{1/d} (\log^2 n) (\log^2 n))$. Again, the concentrations from Lemma \ref{lemma:lambda-conc} are super-polynomial, so the union bound shows that all these quantities are concentrated simultaneously whp.

\begin{lemma}
\label{lemma:rho}
Our choice of canonical paths gives $\rho = O \left( (\log n)^{2/d} \right)$.
\end{lemma}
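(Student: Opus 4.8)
The plan is to bound $\sigma(e) = \sum_{\gamma_{uv}\ni e}\deg(u)\deg(v)$ for a fixed edge $e=\{x,y\}$ between high weight nodes in adjacent cubes, and then substitute into equation~\eqref{eqn:canon-rho}. I would decompose the sum according to the partition $\{B, A_1,\ldots,A_M\}$ of the vertex set: write $\sigma(e) = \sum_{T_1,T_2} \sigma_{T_1,T_2}(e)$, where $\sigma_{T_1,T_2}(e)$ collects the canonical paths through $e$ whose initial endpoint $u$ lies in $T_1\cap V_1$ and whose final endpoint $v$ lies in $T_2\cap V_2$. Since each such path contributes $\deg(u)\deg(v)$, and by equation~\eqref{eqn:degree} every $v\in A_k$ has $\deg(v) = O(a_{k-1}\log n)$ while every $v\in B$ has $\deg(v)=O(\log n)$, I can upper bound each block by $\lambda(T_1,T_2)$ times the product of the two relevant degree bounds. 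Using Lemma~\ref{lemma:lambda-conc} to replace $\lambda(T_1,T_2)$ by $\Theta\!\big(\E[|T_1\cap V_1|]\,\E[|T_2\cap V_2|]/\log^2 n\big)$ and Lemma~\ref{lemma:T-conc} to evaluate $\E[|T_i\cap V_i|] = |T_i|\vol(U_i)$, the contribution of the block $(A_j, A_k)$ becomes, up to constants,
\[
\frac{|A_j|\,\vol(U_1)\cdot|A_k|\,\vol(U_2)}{\log^2 n}\cdot (a_{j-1}\log n)(a_{k-1}\log n)
= \vol(U_1)\vol(U_2)\,\big(|A_j|\,a_{j-1}\big)\big(|A_k|\,a_{k-1}\big),
\]
and analogously for blocks involving $B$ (with the factor $|B|\cdot 1$ in place of $|A_k|a_{k-1}$, since $|B|=\Theta(n)$).

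Next I would use Lemma~\ref{lemma:size-Ak}, which gives $|A_k| = \Theta(n/a_{k-1}^{1+\epsilon})$, so that $|A_k|\,a_{k-1} = \Theta(n/a_{k-1}^{\epsilon}) = \Theta(n\,a_{k-1}^{-\epsilon})$. Summing over all blocks,
\[
\sigma(e) = O\!\left(\vol(U_1)\vol(U_2)\left(\alpha n + \sum_{k=1}^M \frac{n}{a_{k-1}^{\epsilon}}\right)^{\!2}\right)
= O\!\left(\vol(U_1)\vol(U_2)\,n^2\right),
\]
where the last step uses Lemma~\ref{lemma:sum}, $\sum_{k=0}^M a_k^{-\epsilon}=\Theta(1)$, to collapse the double sum over the partition into a constant multiple of $n^2$. (The $B$-involving blocks contribute $\alpha n$ per factor, which is absorbed into the same $\Theta(1)$-bounded sum since $|B|=\alpha n$ and we may treat $B$ as the ``$k=0$'' term with degree bound $O(\log n)$ and effective weight factor $\Theta(1)$.) Finally, $\vol(U_1)\vol(U_2) = (\log n/n)^{(d-k)/d}\cdot(\log n/n)^{(k-1)/d} = (\log n/n)^{(d-1)/d}$, independent of $k$, so $\sigma(e) = O\big(n^2(\log n/n)^{(d-1)/d}\big) = O\big(n^{(d+1)/d}(\log n)^{(d-1)/d}\big)$. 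Substituting into equation~\eqref{eqn:canon-rho} gives
\[
\rho = O\!\left(\frac{1}{n^{(d-1)/d}(\log n)^{(d+1)/d}}\right)\cdot O\!\left(n^{(d+1)/d}(\log n)^{(d-1)/d}\right) = O\!\left((\log n)^{2/d}\right),
\]
as desired. One must also check the non-high-high edge types (the $x$ low, $y$ high case, where $\sigma(e)$ is dominated by $O(n)$ paths each with bounded degree product $O(\log^2 n)$, giving a strictly smaller contribution), but these are easily seen not to dominate.

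The main obstacle is the step that collapses the double sum $\sum_{j,k}(|A_j|a_{j-1})(|A_k|a_{k-1})$: naively this is a sum of $M^2$ terms, and one needs the cancellation $|A_k|a_{k-1} = \Theta(n a_{k-1}^{-\epsilon})$ together with the convergence of $\sum a_k^{-\epsilon}$ to see that it is only $\Theta(n^2)$ rather than, say, $\Theta(M^2 n^2)$ or worse. This is precisely where the weight-decay hypothesis $\pr[W\ge x]=O(1/x^{d+\nu})$ is essential — it is what makes $h(x)\gg x^{1+\epsilon}$ and hence guarantees the geometric-type decay needed in Lemma~\ref{lemma:sum}. A secondary technical point is justifying that the concentration statements of Lemmas~\ref{lemma:T-conc} and~\ref{lemma:lambda-conc} may be applied simultaneously to all $O((n/\log n)^{1/d}\cdot\mathrm{polylog}\,n)$ relevant (edge, block)-pairs; this is handled by the union-bound discussion already given after Lemma~\ref{lemma:lambda-conc}, since each failure probability is super-polynomially small.
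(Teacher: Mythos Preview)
Your approach is essentially identical to the paper's own proof: you decompose $\sigma(e)$ according to the partition $\{B,A_1,\ldots,A_M\}$, bound each block by $\lambda(T_1,T_2)$ times the product of the degree caps, invoke Lemmas~\ref{lemma:T-conc}, \ref{lemma:lambda-conc}, \ref{lemma:size-Ak}, and then collapse the double sum via Lemma~\ref{lemma:sum}. Your presentation is in fact slightly tidier than the paper's, since you factor the double sum as $\big(\sum_k |A_k|a_{k-1}\big)^2$ from the outset rather than computing $\sigma(B,B)$, $\sigma(A_r,A_s)$, $\sigma(B,A_s)$, $\sigma(A_r,B)$ separately and reassembling, and you also explicitly note that the non-high-high edge types (which the paper's proof passes over in silence) do not dominate.

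One arithmetic slip at the very end: your final simplification
\[
\frac{1}{n^{(d-1)/d}(\log n)^{(d+1)/d}}\cdot n^{(d+1)/d}(\log n)^{(d-1)/d}
\]
equals $n^{2/d}(\log n)^{-2/d}=(n/\log n)^{2/d}$, not $(\log n)^{2/d}$. The lemma statement in the paper contains the same typo; the paper's proof itself concludes, correctly, that $\rho = O\big((n/\log n)^{2/d}\big)$, which is what is actually used downstream in the proof of Theorem~\ref{thm:mixing}.
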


\begin{proof}
We  bound $\sigma(e) =  \sum_{\gamma_{uv} \ni e} \deg(u) \deg(v)$. First consider the contributions from canonical paths from $B \cap V_1$ to $B \cap V_2$.  Let $\sigma(B,B)$ denote the contribution of paths between low weight nodes to $\sigma(e)$.

Recall that if  $v \in B$ then $\deg(v) = \Theta(\log n)$. By Lemmas \ref{lemma:lambda-conc}, and \ref{lemma:T-conc}, we have whp
\begin{eqnarray*}
\E[ \sigma(B, B)]
&=& 
O \left(  \frac{\lambda(B,B)}{\log^2 n} \cdot   (\log^2 n) \right) 
\, = \,
O \left( |B \cap V_1| \cdot  |B \cap V_2 | \right) \\
&= &
O \left(
n \left(\frac{\log n}{ n}\right)^{(d-k)/d} \cdot n \left(\frac{\log n}{ n} \right)^{(k-1)/d}
\right) \\
& = &
O \left( n^{(d+1)/d}(\log n)^{(d-1)/d}\right).
\end{eqnarray*}

We consider the contribution of  paths between high weight nodes by using the partition $A_1, A_2, \ldots, A_M$ specified in equation \eqref{eq:defAk}. Let $\sigma(A_r,A_s)$ denote the contribution to $\sigma(e)$ for paths from $A_r$ to $A_s$.
Recall that, as per equation \eqref{eqn:degree}, if $v \in A_k$ then
$\deg(v) = O(a_{k-1} \log n)$.
Arguing similarly to the calculation above, 
the contribution to $\sigma(e)$ from paths between nodes in $A_r$ and $A_s$, where $0 \leq r,s \leq M$, is whp
\begin{eqnarray*}
\sigma(A_r, A_s)
&=& 
O \left(  \frac{\lambda(A_r,A_s)}{\log^2 n} \cdot a_r a_s \log^2 n \right) \\
&=&
O \left(
\frac{n}{a_r^{1+\ep}}  \left(\frac{\log n}{n}\right)^{(d-k)/d}    
\cdot    \frac{n}{a_s^{1+\ep}}   \left(\frac{\log n}{n}\right)^{(k-1)/d}  a_r a_s  
\right) \\ 
&= &
O \left(\frac{ n^{(d+1)/d}(\log n)^{(d-1)/d}}{a_r^{\ep} a_s^{\ep}} \right).
\end{eqnarray*}
Next we consider the  paths between low weight and high weight nodes:  whp
\begin{eqnarray*}
\sigma(B, A_s)
&=& 
O \left(  \frac{\lambda(B,A_s)}{\log^2 n}  \cdot  a_s \log^2 n \right) \\
&=&
O \left(
{n}  \left(\frac{\log n}{n}\right)^{(d-k)/d}    
\cdot   \frac{n}{a_s^{1+\ep}}   \left(\frac{\log n}{n}\right)^{(k-1)/d}   a_s  
\right) \\ 
&= &
O \left(\frac{ n^{(d+1)/d}(\log n)^{(d-1)/d}}{a_s^{\ep}} \right). 
\end{eqnarray*}
and similarly, $\sigma(A_r, B) = 
O \left( n^{(d+1)/d}(\log n)^{(d-1)/d} /a_r^{\epsilon} \right).$
Putting these estimates together, whp every edge $e$ between cubes satisfies
\begin{eqnarray*}
\sigma(e) &=& \sigma(B,B) + \sum_{j=1}^M (\sigma(B,A_j) + \sigma(A_j,B))
+ \sum_{j=1}^M \sum_{k=1}^M \sigma(A_j, A_k) \\
&=&
 O \left(
n^{(d+1)/d}(\log n)^{(d-1)/d} 
\Big( 1 + 2 \sum_{i=1}^{M} \frac{1}{ a_i^{\ep}} +
 \sum_{j=1}^{M} \sum_{k=1}^{M} \frac{1}{ a_j^{\ep}  a_k^{\ep}} \Big) 
\right) \\
&=&
 O \left(
n^{(d+1)/d}(\log n)^{(d-1)/d} 
  \Big( 1 +  2\sum_{i=1}^{M} \frac{1}{ a_i^{\epsilon}} +
 \Big( \sum_{j=1}^{M} \frac{1}{ a_j^{\epsilon}} \Big)^2 \Big)
\right) \\
&=&
 O \left( n^{(d+1)/d}(\log n)^{(d-1)/d} 
\right)
\end{eqnarray*}
where the last equality follows from Lemma \ref{lemma:sum}.

Finally, by equation \eqref{eqn:canon-rho} we have whp 
$
\rho = O \left( \left( n/\log n \right)^{2/d} \right).
$
\end{proof}

\begin{proofof}{Theorem \ref{thm:mixing}}
Equation (\ref{eq:mix}) gives $\tau_x(\delta) \leq \rho \left( \log \pi(x)^{-1} + \log \delta^{-1} \right).$
The previous lemma ensures that $\rho=O((n / \log n)^{2/d})$. Meanwhile, we have $\pi(x) = \deg(x) / 2|E(G)| = \Omega( \log n / (n \log n)) = \Omega(1/n)$ by Lemma \ref{thm:gtginterval}  and Lemma \ref{thm:edges}. In summary,
$\tau_x(\delta) = O(n^{2/d} (\log n)^{(d-2)/d})$ for $\delta = 1/n$.
\end{proofof}

\section{Conclusion}

\label{sec:conc}

We have shown that if the weight distribution of a $d$-dimensional GTG satisfies $\pr[W > x] = O(1/x^{d+\nu})$, then its mixing time is $O(n^2/d (\log n)^{(d-2)/d})$. This matches the known bounds for RGG. Our proof uses a spanning subgraph among the $\alpha n$ high weight nodes to create a scaffold for canonical paths. In constructing these paths, our proof treats all high weight nodes identically, ignoring the particularly large reach of the highest weight nodes. We did try  to take advantage of these hub nodes, but  found that they were to sparse to leverage for canonical paths. 

One might wonder whether this is a shortcoming of the method of canonical paths, rather than a reflection on the characteristics of GTG. However, initial investigations using conductance to bound mixing (as in \cite{avin-2007-cover}) suggests the same conclusion. Of course, using conductance introduces its own set of technical challenges due to the heterogeneous nature of the degrees.

For technical reasons, we assumed that the weight distribution decayed as
$\Pr[W \geq x] = 1/x^{d+\nu}$. We believe that the equivalence of mixing for GTG and RGG extends to weight distributions with slower decay. In particular, we conjecture that these mixing time of a GTG with weight decay
$\Pr[W \geq x] = O(1/x^{\gamma})$ matches that of  RGG when
$\gamma \geq 1 + 1/d$, and that GTG mixes faster when 
$1 < \gamma < 1 + 1/d$.

\section*{Acknowledgments}  
The first author was supported in part by NSA Young Investigator Grant H98230-08-1-0064. The second author was supported in part through the Laboratory Directed Research and Development Program, and Center for Nonlinear Studies at Los Alamos National Laboratory.

\bibliographystyle{acm}
\bibliography{cover-new}

\appendix
\section{Characteristics of GTG for example weight distributions}
\label{sec:examples}

We describe the relevant characteristics of GTGs for two different weight distributions: exponential decay and   polynomial decay. 

\subsection{Exponential Weight Distribution}
Our first example is the exponential weight distribution $f(w) = e^{-w}$ with cumulative density function
$F(x) = 1 - e^{-x}$. Inverting the cdf gives $F^{-1}(x) = - \log (1 - x)$. 

We first discuss the weights and degrees of the nodes in the GTG, as described in Section \ref{sec:node_weights}. As per equation \eqref{eq:maxweight}, the maximum weight satisfies
$$W_{\max} \in [\log n -  \omn, \log n +  \omn]$$
 whp.
Lemma \ref{thm:gtginterval} guarantees that whp all the node degrees are in the interval
$$I_{GTG} = \left[  \Theta(  \log n), \Theta(\log^2 n) \right].$$ 

Next, we partition the interval $I_{GTG}$.  By
equation \eqref{eq:defB}, the cutoff for low weight nodes is 
 $F^{-1}(1 - \alpha) = \log \alpha^{-1}$. We partition the high weight nodes into disjoint subsets as specified in equation \eqref{eq:defAk}.
 Note that
$F^{-1}\left(1- x^{-(1+\ep)} \right) =  \log ( x^{1+\ep}) = \Theta(\log x)$. 
 Therefore, ignoring leading constants, the sequence of endpoints
(in descending order) is
\begin{equation*}
\big(\log n, \log \log n, \log \log \log n, \ldots , \log^{(M)} n \big) 
\end{equation*}
where $M \leq \log^*n$,
where the iterative logarithm function $\log^* n$ is  the number of iterations of the log function required to obtain a result  less than 1.

Finally, we calculate upper bound on the constant $c$ required by Lemma
\ref{lemma:cbound} in Section \ref{sec:canonical}. For the exponential distribution, taking $c \leq ((d+3)^{d/2}e)^{-1}$ is sufficient.

\subsection{Pareto Weight Distribution}
We now give a parallel characterization of our  second example: 
a Pareto distribution with cumulative density function 
$F(x) = 1 - x^{- \gamma}$ where $\gamma > d \geq 2$. Inverting this cdf gives 
$F^{-1}(x) = (1-x)^{-1/\gamma}$. 

We consider our results concerning node weights and node degrees.
We have
$$W_{\max} \in \left[\left(\frac{n}{\omn} \right)^{1/\gamma}, 
\left(n \omn\right)^{1/\gamma}\right]$$ 
whp by
equation \eqref{eq:maxweight}. 
By Lemma \ref{thm:gtginterval}, whp all the node degrees are in
$$I_{GTG} = [  \Theta(  \log n), \Theta((n \, \omn)^{1/\gamma} \log n) ].$$
We separate the low weight nodes from the high weight nodes using the weight cutoff
 $F^{-1}(1-\alpha) = \alpha^{-1/\gamma}.$
Next, we partition the high weight nodes as per equation \eqref{eq:defAk}.
Note that $F^{-1}\left(1- x^{-(1+\ep)} \right) =  x^{(1 + \ep)/\gamma}$. It follows that our sequence of endpoints is
\begin{equation}
\nonumber
( (n \, \omn)^{1/\gamma}, (n \, \omn)^{\beta/\gamma}, (n \, \omn)^{\beta^2/\gamma}, \ldots , (n \, \omn)^{\beta^{M}/\gamma} )
\end{equation}
where $\beta = (1+ \ep)/\gamma < 1$. Here $M$ is the smallest integer such that $ (n \omn)^{\beta^M / \gamma}  \leq F^{-1}(1-\alpha) = \alpha^{-1/\gamma}$. The latter requirement is equivalent to the condition 
  $M \geq ( \log  \log (n \omn) - \log \log (\alpha^{-1}) ) / \log(\beta^{-1}).$ 

Finally, we calculate an upper bound on the constant $c$ required by Lemma
\ref{lemma:cbound} in Section \ref{sec:canonical}.
Since  $F^{-1}(x) = (1-x)^{-1/\gamma}$, by Lemma~\ref{lemma:cbound} it follows $c \leq ((d+3)^{d/2}2^{(\gamma-1)/\gamma})^{-1}$.

\end{document}